\documentclass{article}
\usepackage[utf8]{inputenc}
\usepackage{pdfpages}
\usepackage{amsthm}
\usepackage{amsmath}
\usepackage[hidelinks]{hyperref}
\usepackage[nameinlink]{cleveref}
\usepackage{url}
\usepackage{thmtools}
\usepackage{amssymb}
\usepackage{float}
\usepackage{pdflscape}
\usepackage{dsfont}
\usepackage{natbib}
\usepackage{placeins}

\usepackage[labelformat=simple]{subcaption}

\usepackage{tikz}
\usetikzlibrary{patterns}
\usetikzlibrary{matrix}
\usetikzlibrary{graphs,quotes}\usepackage{pgfplots}
\pgfplotsset{compat=1.15,
	every axis/.append style={
		axis lines=center,
		xlabel style={anchor=south west},
		ylabel style={anchor=south west},
		zlabel style={anchor=south west},
		tick align=outside,}
}
\usepgfplotslibrary{patchplots}
\pgfdeclareverticalshading{darkGray}{100bp}
{color(0bp)=(black!45); color(32bp)=(black!45); color(38bp)=(black!40);color(45bp)=(black!30); color(100bp)=(black!25)}

\pgfdeclareverticalshading{darkGray2}{100bp}
{color(0bp)=(black!45); color(40bp)=(black!45); color(45bp)=(black!40);color(50bp)=(black!30); color(100bp)=(black!25)}

\pgfdeclareverticalshading{midGray}{100bp}
{color(0bp)=(black!30); color(30bp)=(black!35);color(35bp)=(black!30);color(40bp)=(black!25); color(47bp)=(black!15); color(100bp)=(black!10)}

\pgfdeclareverticalshading{lightGray}{100bp}
{color(0bp)=(black!20); color(28bp)=(black!15);color(35bp)=(black!15); color(40bp)=(black!10); color(45bp)=(black!5); color(100bp)=(black!5)}

\pgfdeclareverticalshading{lightGray2}{100bp}
{color(0bp)=(black!20); color(35bp)=(black!15);color(40bp)=(black!15); color(45bp)=(black!10); color(50bp)=(black!5); color(100bp)=(white)}

\usepackage{cancel}

\usepackage[ruled,vlined,linesnumbered]{algorithm2e}
\SetKwProg{Fn}{Function}{}{}
\DontPrintSemicolon
\SetKwFor{For}{for}{}{}
\SetKwFor{Forall}{for all}{}{}
\SetKwFor{While}{while}{}{}

\SetCommentSty{mycommfont}

\usepackage{rotating} \usepackage{booktabs} \usepackage{mathtools}
\usepackage{cleveref}
\crefname{algocf}{alg.}{algs.}
\Crefname{algocf}{Algorithm}{Algorithms}

\newtheorem{defi}{Definition}
\newtheorem{theorem}[defi]{Theorem}
\newtheorem{lem}[defi]{Lemma}

\newtheorem{rem}[defi]{Remark}

\newtheorem{example}[defi]{Example}

\newcommand {\N}{\mathds{N}}

\newcommand {\R}{\mathds{R}}

\DeclareMathOperator{\update}{update}

\newcommand{\g}{\gamma}

\newcommand {\E}{E}     

\makeatletter
\addtotheorempostfoothook{\global\everypar{{\setbox\z@\lastbox}\global\everypar{}}}
\makeatother

\newcommand{\facloc}{v} \newcommand{\ivPaths}{\mathcal{P}_i(\facloc)} \newcommand{\allPaths}{\mathcal{P}_V(\facloc)} \newcommand{\OnevPaths}{\mathcal{P}_1(\facloc)}
\newcommand{\nvPaths}{\mathcal{P}_n(\facloc)}
\newcommand{\selectedPaths}{P_V(\facloc)} \newcommand{\selectedPathsprime}{P_V(\facloc')} \newcommand{\selectedPathsC}{P_V(\facloc^*)} \newcommand{\selectedPathi}{P_i(\facloc)} \newcommand{\selectedPathOne}{P_1(\facloc)}

\newcommand{\selectedPathN}{P_n(\facloc)}
 \newcommand{\selectedPathiprime}{P'_i(\facloc)}

\newcommand{\Path}{P} \newcommand{\AltPath}{Q} \newcommand{\C}{\mathcal{C}} \newcommand{\X}{\mathcal{X}} \newcommand{\cat}{\eta} 

\newcommand{\sol}{z=(\facloc,\selectedPaths)} \newcommand{\solprime}{z'=(\facloc',\selectedPathsprime)} \newcommand{\solC}{z^*=(\facloc^*,\selectedPathsC)} 

\newcommand{\SCon}{SC} \newcommand{\Con}{C} \newcommand{\VSCon}{SC(\facloc)} \newcommand{\VCon}{C(\facloc)} 

\DeclareMathOperator{\up}{up}
\newcommand{\parS}{(\Vc,\Ec)}
\newcommand{\Aup}{A^{\up}}
\newcommand{\aup}{a^{\up}}
\newcommand{\Vc}{V_c}
\newcommand{\Ec}{E_c}
\newcommand{\VP}{V_P}
\newcommand{\EP}{E_P}
\newcommand{\M}{\mathcal{M}}
\newcommand{\At}{A_t} \newcommand{\A}{\mathcal{A}} %

\newcommand{\highlight}[1]{\textcolor{teal}{#1}}

\begin{document}

\title{Consistent Path Selection for Bi-objective Median Location Problems on Graphs}
\author{Renée Lamsfuß \and Kathrin Klamroth \and Michael Stiglmayr \and Julia~Sudhoff~Santos}

\date{\small University of Wuppertal, School of Mathematics and Natural Sciences, Gaußstr.~20, 42119 Wuppertal, Germany\\ \texttt{\{mlamsfuss,klamroth,stiglmayr,sudhoff\}@uni-wuppertal.de}}

\maketitle

\begin{abstract}
We consider single-facility median location problems on graphs where two conflicting cost values are associated with the edges.
As an example, {suppose that} a decision maker wants to locate one new facility, e.\,g., a pizza delivery place, which uses bicycles for delivery. The two objective functions could then be the total traveling time and the total number of left turns, as the latter are very risky. Path choices then depend on the preferences of the decision maker, and in general, there may not exist a unique optimal path between a customer and a new facility location. 

In this paper, we consider the location decision and the routing decision in a coupled problem, i.\,e., we search for an optimal location and for consistent delivery paths simultaneously.
We introduce the concept of consistent paths, where we assume that the choice of a path from the facility to a demand node implies certain preferences. All paths of a solution are consistent if the preferences of all paths do not contradict each other. We present an algorithm that computes {a minimum complete set of} efficient solutions with consistent path choices and illustrate the results at example instances in the city of Wuppertal in Germany. 

\medskip\noindent
Keywords: \emph{coupled location and path planning, location problems on graphs,  consistent path choice, {multi-objective} median problem}
\end{abstract}

\section{Introduction}
We consider bi-objective median location problems on graphs, i.\,e., we want to locate one new facility on a simple undirected graph such that the sum of bi-objective edge costs from the new facility to a finite set of customers (located at the nodes of the graph) is minimized. 
As an example, suppose that a decision maker wants to open a new pizza delivery service from which pizza is delivered to customers by bicycle. {Each edge is associated with two cost coefficients: The first measures the travel time, and the second counts the number of left turns and is hence a measure for the safety of the considered route. (Note that the latter may require the introduction of artificial nodes and edges at intersections.)} In this situation, we aim to find a location for the pizza shop \emph{together} with delivery routes {to all customers} such that both the total travel time and the number of left turns are minimized. {
Note that,} in general, there does not exist one unique safest and shortest path from the new facility to a customer but a set of efficient compromise solutions. 

When safety is a major concern, it can also be modeled by means of an ordinal objective function {that assigns safety categories to all edges of the graph (in addition to their respective lengths)}. If this ordinal objective function considers two categories, the problem can {also} be reformulated as a bi-objective problem, {i.e., as a location problem with bi-objective edge costs}. See \cite{Klamroth2023Ordinal} for an in-depth discussion of ordinal optimization problems in general.  Ordinal location problems with two categories are used for illustration throughout this paper. Note that there exist different concepts of optimality for ordinal costs, which do not all comply with Bellmann's principle of optimality, and hence do not lead to meaningful paths in the context of the location problems. Such concepts were considered, e.\,g., in \cite{Schaefer2020Shortest}. For a discussion of shortest path problems and facility location problems we also refer to \cite{froehlich_diss}.

The literature on bi- and multi-objective location problems often assumes vector-valued demands, where each scenario defines one objective function; see, e.\,g., \cite{Nickel2019Location}. Multi-objective edge lengths are considered in \cite{Nickel05Location}. However, they consider these edge lengths as independent, i.\,e., different paths can be chosen for each objective function. Multi-objective demands and multi-objective edge lengths (still allowing independent route choices per objective) are combined, e.\,g., in \cite{Colebrook2007polynomial} and \cite{Colebrook2007Undesirable}. 
To the best of our knowledge, the only work that considers bi-objective shortest path problems in combination with median location problems is \cite{Skriver2004Bicriteria}. In this work, a new facility is to be chosen together with unique routes to all customers, i.\,e., the routes must be the same for both objective functions. We draw connections to \cite{Skriver2004Bicriteria} throughout this paper and discuss similarities as well as differences. Bi-objective shortest path problems with centrality measures are discussed in \cite{SIMONDEBLAS2026103575}.

The main contribution of this paper is the formulation, analysis, and algorithmic solution of bi-objective, single-facility median location problems on graphs with consistent route planning. We consider the location decision \emph{and} the routing decision as a coupled problem and hence search for the location \emph{and} the \emph{consistent} delivery routes simultaneously. 

In this context, we consider a \emph{central choice model} (CC), i.\,e., a central decision maker selects paths to all customers according to his/her preferences. The choices of the decision maker are said to be \emph{consistent} if the underlying preference relation stays the same for the paths to all customers.
This model represents, for example, the optimal location of a restaurant that delivers food to customers by bicycle. Let there exist, for example, a route to the first customer, which takes 20 minutes and 3 left turns, and another route which takes 15 minutes and 8 left turns. If the decision maker decides to use the first route, this means that he/she prefers 5 left turns less over a reduction of the travel time by 5 minutes. Then, we assume that this preference also holds for all decisions regarding the routes to the other customers. 
Note that in other situations, e.\,g., when individual customers visit the new facility, \emph{distributed choice models} (DC) may be more appropriate. This case is not considered in this paper.

The remainder of the paper is organized as follows. In Section~\ref{sec:olp}, we introduce bi-objective location {and path planning} problems, define consistency and show that we can restrict the set of feasible locations to the node set of the graph. Moreover, we introduce a specific bi-objective location {and path planning} problem, the ordinal location {and path planning} problem with two categories, which will be used for illustration throughout this paper. We show in Section~\ref{sec:specific} that for specific graphs, where the path selection is unique, the unique optimal solution can be computed easily {(even in the multi-objective case)}. In Section~\ref{sec:generalgraphs} solution algorithms for general graphs are introduced. Numerical tests on small instances selected from the street network of Wuppertal are presented in Section~\ref{sec:numeric}. We conclude this paper in Section~\ref{sec:concl} with a short summary and outlook.

\section{Bi-objective Location Models and Consistent Paths}\label{sec:olp}

Let $G=(V,E)$ be a simple, connected, and undirected graph with node set~$V$ with $|V|=n$ and edge set~$E\subseteq \{[u,v]\colon u,v\in V\}$ with $|E|=m$. We assume that all nodes $v_i\in V$ represent customers with non-negative demands $w_i\geq0$, where at least one $w_j$ exists with $w_j>0, j\in\{1,\dots,n\}$, to exclude the trivial case. We consider discrete location problems where one new facility is to be located at one of the nodes in $V$ (i.\,e., at one of the customer locations). A path $\Path$ from \(v_i\) to \(v_j\) is represented by the sequence of its nodes $P=(v_i=v_{k_1},v_{k_2},\ldots,v_{k_{{\ell}}}=v_j)$, where $[v_{k_i},v_{k_{i+1}}]\in E$ for all $i=1,\dots,{\ell}$. All edges $e=[v_i,v_j]\in E$ have bi-objective, component-wise non-negative costs $c(e)\in\R^2_\geq$, where $\R^p_{\geq}=\{y\in\R^p:y_i\geq0\;\text{for all }i\in\{1,\ldots,p\},y\neq0\}$ is defined with respect to the component-wise order (for $p\in\N$, $p\geq 2$).

With $\allPaths=(\OnevPaths,\dots,\nvPaths)$ we denote the tuple of \emph{all} facility-to-customer paths from a facility location $\facloc$ to the other nodes $v_i\in V$, $i=1,\dots,n$. Let $\sol$ be a tuple with $v\in V$ denoting a facility location in $G$ and let $\selectedPaths=\{\selectedPathOne,\dots,\selectedPathN\}$ be the set of corresponding facility-to-customer paths, i.\,e., let $\selectedPathi$ be a \emph{selected} $(\facloc,v_i)$-path in $G$. We denote the set of all such tuples with $\X$. 
We define the objective value $c(\selectedPaths)\in\R^2_\geq$ of a feasible tuple ${\sol\in\X}$ and  the costs $c(\Path)\in\R^2_\geq$ of a path~$\Path$  as
\[
c(\selectedPaths)\coloneqq \sum_{i=1}^n w_i \,c(\selectedPathi), \quad
\text{where} \; c(\Path)\coloneqq \sum_{e\in\Path}c(e).
\]

For algorithmic purposes, we formulate the \emph{bi-objective location and path planning problem} \eqref{BLP} with respect to a preference matrix $A$ as
\begin{equation}\label{BLP}\tag{BLPP}
        \begin{aligned}
            \min\enspace &f(z)=A\cdot c(\selectedPaths)\\
            s.t.\enspace &\sol\in \X,
        \end{aligned}
\end{equation}
where the \textit{preference matrix} $A\in\A_2$ and $\A_p=\{A\in\R^{p\times p}_\geq:a_{ii}=1,i=1,\dots p\}$ for $p\in\N$. Preference matrices will be primarily used within solution algorithms in order to adaptively and automatically update partial preferences, and to ensure a consistent path selection within a solution $\sol$.
For general problems, we initialize $A=I$ with the identity matrix.
We discuss the role of the preference matrix in detail in \Cref{sec:generalgraphs}. {We also refer to \citet{Kaddani2017Weighted} for modeling partial preferences in multi-objective optimization, in general.}

Given two solutions $\sol$ and $\solprime\in\X$, $z$ dominates $z'$ if $f(z)\leq f(z')$ with respect to the component-wise order.
If there exists no $\bar{z}\in\X$ such that $f(\bar{z})\leq f(z)$, we say $z$ is an \emph{efficient} or \emph{Pareto optimal} solution, and $f(z)$ is a \emph{non-dominated} outcome vector. If a solution $z$ can be obtained as optimal solution of a weighted sum scalarization with strictly positive weights, the solution is called \emph{supported} \citep[e.\,g.,][]{koenen25supportedness}. A solution $z$ is called \emph{extreme supported} if $f(z)$ is located on an extreme point of the convex hull of the feasible images $\mathcal{Y}=f(\X)$. In this context, we distinguish supported solutions that solve \eqref{BLP} for a prefixed facility location $v\in V$ and refer to them as \emph{locally supported} from those \emph{globally supported} solutions that solve \eqref{BLP} among all possible facility locations \citep{Skriver2004Bicriteria}. 

\subsection{Consistent Paths}
If the central decision maker has decided on a certain path to a demand node, then this decision implies certain preferences. If, for example, a path with cost $(10,1)^\top$ is preferred over a path with cost $(1,4)^\top$, then this implies that three units in the first objective are preferred over one unit in the second objective. We now assume that the central decision maker makes consistent path choices to all other demand nodes, such that all paths comply with this preference relation. This formally leads to the following definition of consistency.

\begin{defi}\label{def:consistency}
Let $\sol\in\X$ be a feasible solution of the bi-objective location and path planning problem \eqref{BLP}, where $\selectedPaths=\{\selectedPathOne,\dots,\selectedPathN\}$, and let $\ivPaths$ denote the set of all $(\facloc,v_i)$-paths in the graph $G=(V,E)$ for all $v_i\in V,i\in\{1,\dots,n\}$ and $v\in V$.

\begin{description}
    \item[$(\VCon)$\label{consistent(v)}] A solution $\sol$ and its outcome vector $f(z)$ are called \emph{vertex consistent w.\,r.\,t.~vertex $\facloc$} if there exists a  weight vector $\lambda\in\R^2_\geq$ 
    such that for all $i=1,\dots,n$ and for all $\selectedPathiprime\in\ivPaths$ it holds that $\lambda^\top\cdot A\cdot c(\selectedPathi)\leq \lambda^\top\cdot A\cdot c(\selectedPathiprime)$ with $\selectedPathi\in\selectedPaths$.
    \item[$(\VSCon)$\label{stronglyconsistent(v)}] A solution $\sol$ and its outcome vector $f(z)$ are called \emph{strongly vertex consistent w.\,r.\,t.~vertex $\facloc$} if there exists a weight vector $\lambda\in\R^2_\geq$ 
    such that for all $i=1,\dots,n$ and for all $\selectedPathiprime\in\ivPaths$ with $c(\selectedPathi)\neq c(\selectedPathiprime)$ it holds that $\lambda^\top\cdot A\cdot c(\selectedPathi)<\lambda^\top\cdot A\cdot c(\selectedPathiprime)$ with $\selectedPathi\in\selectedPaths$.
    \item[$(\Con)$\label{consistent}]
    A solution $\solC$ and its outcome vector $f(z^*)$ are called \emph{consistent} if there exists a weight vector $\bar{\lambda}\in\R^2_\geq$ such that $\bar{\lambda}^\top\cdot f(z^*)\leq \bar{\lambda}^\top\cdot f(z)\enspace\forall z\in\X$. 
    \item[$(\SCon)$\label{stronglyconsistent}] A solution $\solC$ and its outcome vector $f(z^*)$ are called \emph{strongly consistent} if there exists a weight vector $\bar{\lambda}\in\R^2_\geq$ such that $\bar{\lambda}^\top\cdot f(z^*)< \bar{\lambda}^\top\cdot f(z)\enspace\forall z\in\X$ with $c(z^*)\neq c(z)$.
\end{description}
Solutions that are not (vertex) consistent are referred to as (vertex) inconsistent.
\end{defi}

Note that every solution that is strongly vertex consistent is also vertex consistent and that the set of strongly consistent solutions is a subset of the set of consistent solutions.
\begin{lem}\label{lem:consistency}
    Every (strongly) consistent solution $\sol$ is (strongly) vertex consistent w.\,r.\,t.\ vertex $\facloc$.
\end{lem}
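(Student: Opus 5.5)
The plan is to argue by contraposition-free direct exchange. Let $\sol$ be consistent, with weight vector $\bar\lambda\in\R^2_\geq$ such that $\bar\lambda^\top f(z)\le\bar\lambda^\top f(\tilde z)$ for all $\tilde z\in\X$. I take $\lambda:=\bar\lambda$ as the candidate weight vector for vertex consistency w.\,r.\,t.\ $\facloc$.

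\textbf{Exchange construction.} Suppose, for contradiction, that some index $i$ and some path $\selectedPathiprime\in\ivPaths$ satisfy
\[
\lambda^\top A\,c(\selectedPathiprime)<\lambda^\top A\,c(\selectedPathi).
\]
Form $z'$ from $z$ by keeping the location $\facloc$ and every other path $P_j(\facloc)$, $j\neq i$, and replacing $\selectedPathi$ by $\selectedPathiprime$. Since $\X$ contains every tuple of a location together with one path to each customer, $z'\in\X$.

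\textbf{Linearity.} Because $f$ is linear in the path costs,
\[
\bar\lambda^\top f(z')-\bar\lambda^\top f(z)=w_i\,\bar\lambda^\top A\bigl(c(\selectedPathiprime)-c(\selectedPathi)\bigr).
\]
If $w_i>0$, this difference is negative, which contradicts the consistency of $z$. Hence the inequality in $(\VCon)$ holds for every $i$ with $w_i>0$.

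\textbf{Main obstacle: customers with $w_i=0$.} Exchanging the path to such a customer does not change $f$, so consistency of $z$ says nothing about the selected path $\selectedPathi$. A consistent $z$ may therefore carry an arbitrary path to a zero-demand customer. For these indices the statement only holds under an implicit convention, and I would make it explicit in one of two ways.
\begin{itemize}
\item[(i)] Restrict the index set in $(\VCon)$ and $(\VSCon)$ to customers with $w_i>0$.
\item[(ii)] Note that redefining such paths as $\bar\lambda^\top A$-optimal changes nothing and yields the claim.
\end{itemize}
The proof would state this assumption explicitly. Otherwise it runs as above.

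\textbf{Strong version.} Assume $z$ is strongly consistent, so every $\tilde z$ with $c(\tilde z)\neq c(z)$ satisfies $\bar\lambda^\top f(z)<\bar\lambda^\top f(\tilde z)$. Take any $\selectedPathiprime$ with $c(\selectedPathiprime)\neq c(\selectedPathi)$ and $w_i>0$. The exchanged solution $z'$ then has $c(z')=c(z)+w_i\bigl(c(\selectedPathiprime)-c(\selectedPathi)\bigr)\neq c(z)$, where $c(z)$ denotes $c(\selectedPaths)$. By the displayed identity, strong consistency gives $w_i\,\bar\lambda^\top A\bigl(c(\selectedPathiprime)-c(\selectedPathi)\bigr)>0$. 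Dividing by $w_i>0$ yields the strict inequality required in $(\VSCon)$ with $\lambda=\bar\lambda$.
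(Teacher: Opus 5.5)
Your argument is the paper's own: exchange the path to a single customer, use linearity of $f$, and contradict (strong) consistency. You state it directly with the witness $\lambda=\bar\lambda$. The paper instead assumes that for every $\lambda$ some index is violated and then implicitly specializes to $\bar\lambda$. These are the same proof, and yours is complete for every customer with $w_i>0$.

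Your caveat about zero demands is correct, and it is a gap in the paper's proof as well. The step ``this implies $\lambda^\top f(z)>\lambda^\top f(z')$'' needs $w_i>0$. The lemma is in fact false in the paper's setting, which explicitly allows $w_i=0$ and introduces artificial zero-demand nodes in Example~\ref{ex:julia}. You assert this without exhibiting it; a small example makes it airtight.

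Take the triangle on $v_1,v_2,v_3$ with all edge costs $(1,1)^\top$, demands $w_1=w_2=1$, $w_3=0$, facility at $v_1$, $P_2(v_1)=(v_1,v_2)$ and $P_3(v_1)=(v_1,v_2,v_3)$. Every feasible $\tilde z$ has aggregate cost either $(1,1)^\top$ or at least $(2,2)^\top$ componentwise, so $z$ is strongly consistent for every $\bar\lambda$. But $\lambda^\top A\,(2,2)^\top>\lambda^\top A\,(1,1)^\top$ for all $\lambda\in\R^2_\geq$, so $z$ is not even vertex consistent.

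Of your two repairs, (i) is the right one: restrict the index set to $w_i>0$, or simply assume all demands are positive. With it, your proof of both versions is complete.

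Repair (ii) does not prove the statement for the given $z$. It replaces $z$ by another solution with the same outcome vector, so it only shows that every (strongly) consistent outcome vector has a (strongly) vertex consistent preimage. In the strong case it also fails as written with $\lambda=\bar\lambda$. If two paths of different cost to a zero-demand customer tie under $\bar\lambda^\top A\,c(\cdot)$, neither choice gives the strict inequality. You would additionally need a tie-breaking perturbation of $\bar\lambda$ that preserves the finitely many strict inequalities for the positive-demand customers.
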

\begin{proof}
    We show the result for a consistent solution $\sol$. We assume by contradiction that $\sol$ with $\selectedPaths=\{\selectedPathOne,\dots,\selectedPathN\}$ is not vertex consistent, i.\,e.\ for all ${\lambda\in\R^2_\geq}$  there exists an index $i\in\{1,\dots,n\}$ and  $\selectedPathiprime\in\ivPaths$ such that  $\lambda^\top\cdot A\, c(\selectedPathi)>\lambda^\top\cdot A\, c(\selectedPathiprime)$. 
    This implies that $\lambda^\top \cdot f(z)>\lambda^\top\cdot f(z')$ with $z'=(v,\{P_1(v),\dots,P_{i-1}(v),\selectedPathiprime,P_{i+1}(v),\dots,P_n(v)\})$, which is a contradiction to the consistency of $\sol$.
    Analogously, one can show the result for a strongly consistent solution.
\end{proof}

\begin{rem}
    From Definition~\ref{def:consistency}, it immediately follows that all consistent solutions are globally supported for \eqref{BLP} and all strongly consistent solutions are globally extreme supported. Moreover, it can be shown that all (strongly) vertex consistent solutions are locally (extreme) supported.
\end{rem}
To illustrate the concept of consistent paths, we consider a specific type of bi-objective location problems, namely ordinal location problems with two categories.

\subsection{Ordinal Location and Path Planning Problems \eqref{top1}}
A special case of bi-objective edge costs is obtained if, in addition to the edge length, each edge is assigned to one of two ordinal categories modeling, for example, the safety of the respective  edge. In this case, we assume that all edges $e=[v_i,v_j]\in E$ have a positive length $l(e)> 0$. 
Additionally, we associate each edge with one of two ordered categories; let $\C=\{\cat_1,\cat_2\}$ be the set of categories. The categories are ordered such that $\cat_1$ is preferred over $\cat_2$. We write $\cat_1\prec\cat_2$ and define the ordinal function $o(e)=\cat$, where $e\in E$ and $\cat\in\C$. Then, the cost vector for the ordinal problem is the \textit{counting vector} defined by
\[ c_q(e)\coloneqq \begin{cases}
        l(e), &\text{if } o(e)=\cat_q \\
        0, &\text{else}
    \end{cases}\qquad  \text{ for all } e\in E \text{ and  } q=1,2.\]

\begin{defi}[Ordinal Location and Path Planning Problem]
Let $c:\X\rightarrow\R_{\geq}^2$ be the counting vector of a feasible tuple. The ordinal location and path planning problem can be equivalently reformulated by means of a bi-objective optimization problem as shown in \cite{Klamroth2023Ordinal}.
In this paper, we {always} consider the \emph{transformed ordinal location and path planning problem \eqref{top1}}:
    \begin{equation}\label{top1}\tag{OLPP}
        \begin{aligned}
            \min\enspace &f(z)= \At \cdot c(\selectedPaths)\\
            s.t.\enspace &\sol\in \X,
        \end{aligned}
    \end{equation}
where $\At\in\A_2$ is defined as
\begin{equation*}\label{A}
\At=\begin{pmatrix}
1 & 1\\0 & 1 
\end{pmatrix}.
\end{equation*} 
\end{defi}
Note that Problem \eqref{top1} is a special case of Problem \eqref{BLP} with a specific choice of the preference matrix $A$ and the edge costs $c$. The preference matrix $\At$ expresses that one unit in category $\cat_1$ is always preferred over one unit in category $\cat_2$.

\begin{example}\label{ex:julia}
We use an instance of \eqref{top1} to illustrate the fact that there may exist solutions that are vertex consistent \nameref{consistent(v)}, but not strongly vertex consistent \nameref{stronglyconsistent(v)}.
    \begin{figure}
    \begin{minipage}[c]{0.6\textwidth}
        \centering
        \begin{tikzpicture}

\node[draw,circle,thick,fill=black!40!green] (1) at (0,0)[] {$v_1$};
\node[draw,circle,thick,fill=black!10] (2) at (0,2) {$v_2$};
\node[draw,circle,thick,fill=black!10] (3) at (2,2) {$v_3$};
\node[draw,circle,thick,fill=black!10] (4) at (2,0) {$v_4$};
\node[draw,circle,thick,fill=black!10] (5) at (4,0) {$v_5$};
\node[draw,circle,thick,fill=black!10] (6) at (0,-2) {$v_6$};
\node[draw,circle,thick,fill=black!10] (7) at (2,-2) {$v_7$};

\graph {
(1) --[bend right=20,thick,draw=black!40!red,swap,"1"] (2);
(1) --[bend left=20,dotted,draw=black!50!green,thick,"2"] (2);
(2) --[bend right=20,thick,draw=black!40!red,swap,"1"] (3);
(2) --[bend left=20,dotted,draw=black!50!green,thick,"2"] (3);
(6) --[bend right=20,thick,draw=black!40!red,swap,"1"] (1);
(6) --[bend left=20,dotted,draw=black!50!green,thick,"4"] (1);
(7) --[bend right=20,thick,draw=black!40!red,swap,"1"] (6);
(7) --[bend left=20,dotted,draw=black!50!green,thick,"4"] (6);
(1) --[bend right=20,thick,draw=black!40!red,swap,"1"] (4);
(1) --[bend left=20,dotted,draw=black!50!green,thick,"3"] (4);
(4) --[bend right=20,thick,draw=black!40!red,swap,"1"] (5);
(4) --[bend left=20,dotted,draw=black!50!green,thick,"3"] (5);

};

\end{tikzpicture}

     \end{minipage}
    \hfill
    \begin{minipage}[c]{0.3\textwidth}
        \centering
        \begin{tikzpicture}

\node[draw,circle,thick,fill=black!40!green] (1) at (0,0)[] {$v_1$};
\node[draw,circle,thick,fill=black!10] (2) at (0,2) {$v_2$};
\node[draw,circle,thick,fill=black!10] (3) at (2,2) {$v_3$};
\node[draw,circle,thick,fill=black!10, inner sep=2.5pt] (k) at (-0.75,1) {$v_{1'}$};
\node[draw,circle,thick,fill=black!10, inner sep=2.5pt] (j) at (1,2.75) {$v_{2'}$};

\graph {
(1) --[bend right=20,thick,draw=black!40!red,swap,"1"] (2);
(1) --[dotted,draw=black!50!green,thick,"1"] (k);
(k) --[dotted,draw=black!50!green,thick,"1",pos=0.25] (2);
(2) --[bend right=20,thick,draw=black!40!red,swap,"1"] (3);
(2) --[dotted,draw=black!50!green,thick,"1",pos=0.75] (j);
(j) --[dotted,draw=black!50!green,thick,"1"] (3);
};

\end{tikzpicture}
     \end{minipage}
    \caption{Left: Ordinal graph $G=(V,E)$ with the facility located at node $v_1$, categories safe ($\eta_1$, green dotted) and unsafe ($\eta_2$, red solid), edge lengths as depicted and {demands} $w_i=1$ for all nodes $v_i\in V,i\in 1,\dots,7$. Right: Subgraph of \(G\) illustrating the transformation to an equivalent simple graph}\label{fig:bsp_julia_excerpt_vollst}\label{fig:bsp_julia}
    \end{figure}

    \begin{figure}[ht!]
        \centering
        \begin{tikzpicture}

\node[draw,circle,thick,fill=black!40!green] (1) at (0,0)[] {$v_1$};
\node[draw,circle,thick,fill=black!10] (2) at (0,2) {$v_2$};
\node[draw,circle,thick,fill=black!10] (3) at (2,2) {$v_3$};
\node[draw,circle,thick,fill=black!10] (4) at (2,0) {$v_4$};
\node[draw,circle,thick,fill=black!10] (5) at (4,0) {$v_5$};
\node[draw,circle,thick,fill=black!10] (6) at (0,-2) {$v_6$};
\node[draw,circle,thick,fill=black!10] (7) at (2,-2) {$v_7$};

\graph {
(1) --[bend right=20,thick,draw=black!40!red,swap,"1"] (2);
(1) --[bend left=20,dotted,draw=black!50!green,thick,"2"] (2);
(2) --[bend right=20,thick,draw=black!40!red,"1"] (3);
(6) --[bend right=20,thick,draw=black!40!red,"1"] (1);
(7) --[bend right=20,thick,draw=black!40!red,"1"] (6);
(1) --[bend right=20,thick,draw=black!40!red,"1"] (4);
(4) --[bend right=20,thick,draw=black!40!red,"1"] (5);

};

\end{tikzpicture}
         \caption{Vertex consistent, but not strongly vertex consistent solution for the graph illustrated in \Cref{fig:bsp_julia}. The path $P_3(v_1)=(v_1,v_2,v_3)$ uses both red solid edges, while the path $P_2(v_1)=(v_1,v_2)=(v_1,v_{1'},v_2)$ uses the green dotted edge with edge length $2$.}
        \label{fig:bsp_julia_notwsplus}
    \end{figure}
    Consider the graph with ordinal edge costs illustrated in \Cref{fig:bsp_julia} (left):Green dotted edges are in category $\eta_1$ (safe) while red solid edges are in category $\eta_2$ (unsafe); the corresponding edge lengths are specified alongside the edges; and the {demands} of all nodes are $w_i=1$, $v_i\in V, i=1,\dots,7$. Note that this graph is not simple. However, it can be easily transformed into an equivalent simple graph by replacing every green dotted edge by two consecutive green dotted edges, each having half of the length of the original edge, and passing via an additional (artificial) zero-demand node, see \Cref{fig:bsp_julia} (right) for an illustration on a subgraph.
    
    Now let the \emph{new} facility be located at node $v_1$. The solution illustrated in \Cref{fig:bsp_julia_notwsplus} is vertex consistent $C(v_1)$, but not strongly vertex consistent $SC(v_1)$. Consider the excerpt of the solution consisting of the nodes $v_1,v_2$ and $v_3$ with the added auxiliary zero-demand nodes $v_{1'}$ and $v_{2'}$.
    If the solution would be strongly vertex consistent, there would exist a weight vector $\lambda=(\lambda_1,\lambda_2)^\top\in\R^2_\geq$ such that the following inequalities hold (recall that $\At=\bigl(\begin{smallmatrix} 1&1\\0&1\end{smallmatrix}\bigr)$ in the case of an ordinal objective function):
\begin{align}
        2\lambda_1=\lambda^\top\cdot \At\cdot c((1,1',2))&<\lambda^\top\cdot \At\cdot c((1,2))=\lambda_1+\lambda_2,\label{eq:path1}\\
        2\lambda_1+2\lambda_2=\lambda^\top\cdot \At\cdot c((1,2,3))&<\lambda^\top\cdot \At\cdot c((1,1',2,2',3))=4\lambda_1,\label{eq:path2}\\
        2\lambda_1+2\lambda_2=\lambda^\top\cdot \At\cdot c((1,2,3))&<\lambda^\top\cdot \At\cdot c((1,1',2,3))=3\lambda_1+\lambda_2,\\
        2\lambda_1+2\lambda_2=\lambda^\top\cdot \At\cdot c((1,2,3))&<\lambda^\top\cdot \At\cdot c((1,2,2',3))=3\lambda_1+\lambda_2.\label{eq:path24}
    \end{align}
    
    From \Cref{eq:path1,eq:path2} we derive the contradiction $\lambda_1<\lambda_2<\lambda_1$, i.\,e., the solution in \Cref{fig:bsp_julia_notwsplus} is not strongly vertex consistent. However, the solution is vertex consistent $C(v_1)$ with respect to the weight vector $\lambda=(1,1)^\top$. {This is exemplarily shown for $i=2,3$, i.e., for the selected paths to the vertices $v_2,v_3$, by verifying \eqref{eq:path1}-\eqref{eq:path24} with inequalities replacing the strict inequalities.} Similarly, a small computation shows that the solution is consistent \nameref{consistent} with respect to the weight $\lambda=(1,1)^\top$, but not strongly consistent \nameref{stronglyconsistent} {(where the latter follows immediately from the above analysis)}.

    Note that there also exist solutions that are (strongly) vertex consistent, but not (strongly) consistent. In this example, strongly vertex consistent solutions with all green edges and the new facility located in a node other than $v_1$ will always be dominated by the solution with the facility located in $v_1$ and hence, are not (strongly) consistent.  

    Moreover, the paths in (strongly) vertex consistent solutions do not necessarily form a tree structure, as this example (Figure~\ref{fig:bsp_julia_notwsplus}) illustrates.
\end{example}

\subsection{Restriction to Efficient Solutions Attained in Nodes}
For the remainder of this paper, we will restrict ourselves to facility locations at nodes. 
Indeed, it is a well-known result for single-objective median problems that, while points in the interior of edges may be optimal, there always exists an optimal location in a node of the graph \citep[see, e.\,g.,][]{Hakimi64}. In other words, if a point on an edge is optimal, then the nodes incident to this edge are optimal as well. This transfers similarly to ordinal and multi-objective median location problems.

\begin{theorem}[see, e.\,g., \cite{Hakimi64}]\label{thm:nodes}
    For a non-dominated outcome vector of a location problem on graphs with vector-valued edge {costs} and real valued {demands}, e.\,g., \eqref{BLP}, at least one efficient solution \((x,P)\) is located at a node. Moreover, if one point in the interior of an edge is efficient, then the whole edge and both incident vertices are efficient as well.
\end{theorem}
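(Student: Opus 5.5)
The plan is to follow Hakimi's classical argument and exploit that, for a point $x$ in the interior of an edge $e=[u,u']$, every path from $x$ to a customer $v_i$ leaves $x$ either through $u$ or through $u'$. I would parametrize $x=x(t)$, $t\in(0,1)$, as the point at fraction $t$ of $e$ from $u$. Then the cost of the sub-edge from $x$ to $u$ is $t\,c(e)$ and to $u'$ is $(1-t)\,c(e)$. Every $(x,v_i)$-path $Q$ therefore has cost either $t\,c(e)+c(Q_u)$ or $(1-t)\,c(e)+c(Q_{u'})$, where $Q_u$ and $Q_{u'}$ are the remaining paths starting at $u$ and $u'$, respectively.

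Now fix a solution $(x,P)$ with $x=x(t)$ in the interior of $e$. I split the customers into the index sets $I_u$ and $I_{u'}$ according to the endpoint through which the selected path $P_i$ leaves $x$. This gives
\[
f(x,P)=A\Bigl(\sum_{i\in I_u}w_i c(P_i^u)+\sum_{i\in I_{u'}}w_i c(P_i^{u'})\Bigr)+\Bigl(t\,W_u+(1-t)\,W_{u'}\Bigr)A\,c(e),
\]
where $W_u=\sum_{i\in I_u}w_i$ and $W_{u'}=\sum_{i\in I_{u'}}w_i$. With the paths fixed, the outcome is affine in $t$ along the fixed direction $(W_u-W_{u'})A\,c(e)$.

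I then compare with the endpoints. Moving $x$ to $u$ and keeping the paths, I replace $P_i$ by $P_i^u$ for $i\in I_u$ and by $(u,u')$ followed by $P_i^{u'}$ for $i\in I_{u'}$. The resulting solution has outcome equal to the above expression with $t=0$. Similarly, moving to $u'$ gives the value at $t=1$. If $W_u\ge W_{u'}$, the direction vector is component-wise non-negative because $A\ge 0$ and $c(e)\ge0$, so the $t=0$ outcome is $\le f(x,P)$ component-wise. If $W_u\le W_{u'}$, the $t=1$ outcome is $\le f(x,P)$ instead.

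For the first claim, let $y$ be non-dominated, attained by $(x,P)$. If $x$ is a node we are done. Otherwise the endpoint solution constructed above has outcome $\le y$, so by non-dominance it equals $y$ and it is efficient, and it is located at a node.

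For the second claim, suppose $(x,P)$ is efficient with $x$ interior. The endpoint comparison forces the direction term to vanish: if $W_u\neq W_{u'}$ and $c(e)\neq0$, the better endpoint strictly dominates in some component, since $A\,c(e)\neq 0$ because $a_{ii}=1$. Hence $(W_u-W_{u'})A\,c(e)=0$, and $f$ is constant in $t$ along $e$ for this path configuration. So every point of $e$, including both incident vertices, attains the same outcome $y$. Any of these solutions dominated by some $\bar z$ would give $f(\bar z)\le y$, contradicting efficiency of $(x,P)$. Therefore the whole edge and both incident vertices are efficient.

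I expect the main obstacle to be the bookkeeping with paths rather than the inequality itself. Prolonging $P_i^{u'}$ by $(u,u')$ may create a non-simple walk, for instance when $P_i^{u'}$ already passes through $u$. I would handle this by shortcutting the walk to a simple path, which has component-wise no larger cost since costs are non-negative. This only improves the outcome, and when equality is forced it still yields the same vector, or else a strict improvement contradicting efficiency. I would also note that the customer located at a node is unaffected, and that zero-demand customers do not matter.
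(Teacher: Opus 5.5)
Your proof is correct and follows essentially the same argument as the paper. In both, you split the customers by the endpoint through which their selected path leaves $x$, and you compare with moving the facility to the endpoint on the heavier side, which changes the outcome by a non-negative multiple of $(W_u-W_{u'})\,A\,c(e)$. Efficiency then forces $W_u=W_{u'}$, so the outcome is constant along the whole edge. Your shortcutting of non-simple walks (which relies on $w_i\ge 0$) and your direct non-dominance argument for the first claim are details the paper leaves implicit.
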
 
\begin{proof}
    Suppose that an efficient solution is a point $x$ in the interior of an edge $e= [v_i,v_j]\in E$  with multi-objective/ordinal cost \(c(e)\geq 0\), i.\,e., we can consider $x$ as an artificial node on $e$. We divide the edge $e$ into two edges $e_1=[v_i,x]$ with costs $c(e_1)=\alpha\,c(e)$ and $e_2=[x,v_j]$ with $c(e_2)=(1-\alpha)\,c(e)$ for an appropriate value of $\alpha\in(0,1)$ corresponding to the relative location of \(x\) on the edge \(e\). Let $I\subseteq\{1,\dots,n\}$ define the index set of all nodes for which the selected path from $x$ is routed via node $v_i$. Similarly, we denote by $J\subseteq\{1,\dots,n\}$ the set of indices of all nodes for which the selected path from $x$ is routed via node $v_j$. Obviously, $J= \{1,\dots,n\}\setminus I$ holds. 
    If $\sum_{k\in I} w_k < \sum_{k\in J} w_k$, then $x$ cannot be an efficient solution because placing the facility at node $v_j$ leads to a reduction of the total costs by 
    \[\biggl(\sum_{k\in J} w_k - \sum_{k\in I} w_k\biggr)\,\underbrace{(1-\alpha)}_{>0}\,\underbrace{A\, c(e)}_{\geq 0}\geq0.\]
    If $\sum_{k\in I} w_k > \sum_{k\in J} w_k$ holds, then it follows similarly that $x$ is dominated by $v_i$ as an efficient location. Thus, $\sum_{k\in I} w_k = \sum_{k\in J} w_k$, which implies that the whole edge $e$ including the nodes $v_i$ and $v_j$ is efficient. This concludes the proof.
\end{proof}

\section{Multi-objective Location {and Path Planning} Problems on Specific Graphs}\label{sec:specific}

In this chapter, we investigate special cases of multi-objective location {and path planning} problems on specific types of graphs. We consider line graphs and trees which both have the advantage that all paths between pairs of nodes are unique. Hence, the set of selected paths \(P\) is uniquely determined by the location $\facloc$. {This considerably simplifies the problem since the consistency of the route choices is inherently guaranteed.} This allows us to consider problems with even more than two objective functions {in this section, which we do for the sake of generality.}

{Note that even in the case of unique paths, problem \eqref{BLP}, and its generalization to multi-objective edge costs (MLPP), differs from the ``multi-scenario'' formulation of \cite{Nickel2019Location}, where multi-objective demands rather than multi-objective edge costs are considered. Indeed, while in the former single-objective paths lengths are multiplied by multi-objective demands, in the latter multi-objective path lengths are multiplied by single-objective demands, leading in general to different interpretations, objective vectors, and optimal solutions.}

\subsection{Line Graphs}
Let $G=(V,E)$ be a line graph, i.\,e., the nodes $V=\{v_1,\dots,v_n\}$ are ordered and edges $E=\{e_i=[v_i,v_{i+1}]: i=1,\dots,n-1\}$ only connect consecutive nodes. We consider cost vectors $c(e_i)\in\R^{p}_{\geq}$ for all edges $e_i$, $i=1,\dots,n-1$, a preference matrix $A\in\A_p$, and  {demands} $w_i\geq 0$ for the nodes $v_i$, $i=1,\dots,n$. Note that since we allow zero {demands}, it may happen that several edges (and all incident vertices) are Pareto optimal.
Then, the \emph{multi-objective location {and path planning} problem on line graphs} is defined as
\begin{equation}\label{eq:Line}\tag{L}
	\begin{array}{rl}
		\min & \displaystyle f(z)=\sum_{i=1}^n w_i \, \sum_{k=\min\{i,j\}}^{\max\{i,j\}-1} A\cdot c(e_k)\\[1.5ex]
		\text{s.\,t.} & z=(v_j,P_V(v_j))\in \X \;\text{with}\; j \in \{1,\ldots,n\}.
	\end{array}
\end{equation}
According to Theorem~\ref{thm:nodes}, we can restrict the set of feasible locations to the set of nodes. Obviously, the objective function $f(z)$ can be reformulated for $z=(v_j,P_V(v_j))\in \X$ as 
\[f(z)=\sum_{i=1}^{j-1} A\cdot c(e_i)\, \sum_{k=1}^i w_k + \sum_{i=j}^{n-1} A\cdot c(e_i)\,\sum_{k=i+1}^n w_k .\]
The following theorem {formulates} a necessary and sufficient optimality condition for problem~\eqref{eq:Line}.
\begin{theorem}\label{thm:line}
    The node $v_j$ for $j\in\{1,\dots,n\}$ is the unique Pareto optimal location of \eqref{eq:Line} if and only if $\sum_{i=1}^{j-1}w_i<\frac{1}{2}\sum_{i=1}^{n}w_i$ and $\sum_{i=1}^{j}w_i>\frac{1}{2}\sum_{i=1}^{n}w_i$.
    Moreover, under the assumption that $w_i>0$ for all $i=1,\ldots,n$ the whole edge $e_j$, $j\in\{1,\dots,n-1\}$ (including the incident nodes $v_j$ and $v_{j+1}$) is Pareto optimal if and only if $\sum_{i=1}^{j}w_i=\frac{1}{2}\sum_{i=1}^{n}w_i$.
    
     In the case of nodes with zero {demand}, there may exist many Pareto optimal edges; however, all nodes incident to such edges are also Pareto optimal. With $w_i\geq0$ for all $i=1,\ldots,n$ an edge $e_j$, $j\in\{1,\dots,n-1\}$ (including the incident nodes $v_j$ and $v_{j+1}$) is Pareto optimal if and only if $\sum_{i=1}^{j-1}w_i\leq \frac{1}{2}\sum_{i=1}^{n}w_i$ and $\sum_{i=1}^{j}w_i=\frac{1}{2}\sum_{i=1}^{n}w_i$.
\end{theorem}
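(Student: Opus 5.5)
The plan is to compare consecutive nodes via the difference of objective vectors. Write $W=\sum_{i=1}^n w_i$ and $W_j=\sum_{i=1}^j w_i$, and let $f_j$ denote $f(z)$ for $z=(v_j,P_V(v_j))$. From the reformulated objective preceding \Cref{thm:line}, moving the facility from $v_j$ to $v_{j+1}$ changes only the contribution of edge $e_j$. In $f_j$ this edge carries weight $W-W_j$; in $f_{j+1}$ it carries weight $W_j$. Hence
\[
f_{j+1}-f_j=(2W_j-W)\,A\,c(e_j),\qquad j=1,\dots,n-1.
\]
Since $A\in\A_p$ has non-negative entries and unit diagonal, and $c(e_j)\in\R^p_\geq$, we have $A\,c(e_j)\geq 0$ and $A\,c(e_j)\neq 0$. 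Moreover, $A\,c(e_j)$ is strictly positive in every component $q$ with $c_q(e_j)>0$. So each step changes $f$ by a non-zero non-negative vector scaled by the sign of $2W_j-W$.

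Next, I would use that $W_j$ is non-decreasing in $j$. The sign sequence of $2W_j-W$ is therefore negative, then zero, then positive. Chaining the differences gives a ``unimodal'' structure in the componentwise order. For $k<j$ with $2W_i-W<0$ for all $k\le i<j$, we get $f_j\leq f_k$ with $f_j\neq f_k$, so $v_k$ is dominated. The mirror statement holds to the right of the median. Within a block of indices where $2W_i-W=0$, all $f$ values coincide.

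The three claims then follow.

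(i) Suppose $W_{j-1}<W/2<W_j$. Then every node to the left of $v_j$ is strictly dominated by chaining to $v_j$, and likewise every node to the right. So $v_j$ is the unique efficient node. Points in the interior of edges are excluded by \Cref{thm:nodes}, since they would require a tie $W_i=W/2$ for the adjacent split. Conversely, if the two inequalities fail, then either some neighbor dominates $v_j$, or a tie $W_{j-1}=W/2$ or $W_j=W/2$ gives an equally good adjacent node, so $v_j$ is not the unique efficient location.

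(ii) With all $w_i>0$, the values $W_j$ are strictly increasing, so at most one $j$ satisfies $W_j=W/2$. In that case $f_j=f_{j+1}$, both nodes dominate all others, and the interior points of $e_j$ have the same value. The interior points follow from the computation in the proof of \Cref{thm:nodes}, whose weight balance is exactly $W_j=W-W_j$. The converse again uses \Cref{thm:nodes}: efficiency of an interior point of $e_j$ forces this balance.

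(iii) With zero demands, $W_j=W/2$ can hold on a consecutive block of indices. The condition $W_{j-1}\le W/2$ together with $W_j=W/2$ characterizes exactly the edges inside this block, on which $f$ is constant and minimal.

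The main obstacle is making sure that chained differences yield genuine domination ($\leq$ with $\neq$), not just weak inequality. This is where $A\,c(e_j)\neq 0$ is needed, together with $A\in\A_p$, whose unit diagonal ensures $A\,c(e_j)$ is nonzero. The boundary cases $j=1$ and $j=n$ also need care, using empty sums equal to $0$. In (iii), I would check that the stated inequality $W_{j-1}\le W/2$ is automatically implied by $W_j=W/2$ and monotonicity, so the condition reduces to $W_j=W/2$.
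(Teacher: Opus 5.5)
Your proposal is correct and follows the same approach as the paper. The paper's proof only says the result follows analogously to Theorem~\ref{thm:nodes}, i.e., by comparing the demand on the two sides of an edge; your identity $f_{j+1}-f_j=(2W_j-W)\,A\,c(e_j)$, chained via the monotonicity of the prefix sums $W_j$ and made strict through $A\,c(e_j)\neq 0$, is exactly that comparison worked out explicitly, including details (boundary cases, redundancy of $W_{j-1}\le W/2$ in the last part) that the paper leaves implicit.
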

\begin{proof}
    The result can be proven analogously to Theorem~\ref{thm:nodes}.
\end{proof}

It is important to note that for line graphs and trees the paths from the {new facility to each} customers are unique. Hence, nothing changes when multiple costs are considered instead of one cost value per edge -- as long as the costs are component-wise non-negative and non-zero. The resulting (Pareto) optimal facility location remains the same as only the {demands} are important. {This highlights again the difference to ``multi-scenario'' location problems with multi-objective demands rather than multi-objective edge costs \citep{Nickel2019Location}, where this is not the case in general.} The following example illustrates this result.

\begin{example}
    Consider the graph in Figure~\ref{fig:line} and assume that all edge costs $c_i\in\R^2_{\geq}$ are non-negative and not equal to the vector of zeros for $i=1,2,3$. 
    Let the costs of the associated multi-objective location problem be given as $A\cdot c_i$ for $i=1,2,3$ {with a preference matrix $A\in\A_2$,} and let the {demands} of the existing facilities be $w_1=1$, $w_2=2$, $w_3=3$ and $w_4=4$. If the new facility is placed at node $v_1$, the costs would be 
    ${(2+3+4)\,A\cdot c_1+(3+4)\,A\cdot c_2+4\,A\cdot c_3}$. However, if the facility is placed at node $v_2$, then only the customers of node $v_1$ have to use edge $[v_1,v_2]$ instead of all customers of nodes $v_2$, $v_3$ and $v_4$. Therefore, the total costs reduce to 
    $A\cdot c_1+(3+4)\,A\cdot c_2+4\,A\cdot c_3$. Similarly, the costs are again lower if the facility is placed at node $v_3$ as $1+2<3+4$. The costs for locating the facility at node $v_4$ would be larger than $1+2+3>4$ and hence, the  (Pareto) optimal location is at node $v_3$ regardless of the multi-objective edge costs 
    $A\cdot c_i\geq 0$, $i=1,2,3$. 

    If the {demands} change, the (Pareto) optimal solution may change. For example, for {demands} $w_1=1$, $w_2=2$, $w_3=3$ and $w_4=6$ the costs of the facility at node $v_3$ would be the same as at node $v_4$. Hence, both nodes and every location on the edge $[v_3,v_4]$ is (Pareto) optimal.
\end{example}
\begin{figure}[ht!]
    \centering
    \clearpage{}\begin{tikzpicture}[xscale=1.85,yscale=1.5,every edge quotes/.append style={font=\footnotesize}]\node[draw,circle,label=above:$w_1$] (1) at (0,0)[] {$v_1$};
\node[draw,circle,label=above:$w_2$] (2) at (1,0) {$v_2$};
\node[draw,circle,label=above:$w_3$] (3) at (2,0) {$v_3$};
\node[draw,circle,label=above:$w_4$] (4) at (3,0) {$v_4$};

\graph {
	(1) --["$A\cdot c_1$",thick,draw=black] (2);
	(2) --["$A\cdot c_2$",thick,draw=black] (3);
	(3) --["$A\cdot c_3$",thick,draw=black] (4);
};
\end{tikzpicture}\clearpage{}
    \caption{Line graph with cost vectors $c_i\in\R_{\geq}^2$, $i=1,2,3$, preference matrix $A\in\A_2$ and {demands} $w_i\in\N_0$, $i=1,2,3,4$.}
    \label{fig:line}
\end{figure}

\subsection{Trees}
In \cite{Shaw1999unified} it is shown that several facility location problems on trees can be reformulated as a tree partitioning problem with a specific structure, and  a generic algorithm to solve location problems on trees is suggested. This work is based on the results in \cite{Cornuejols1983uncapicitated} and \cite{Tamir1996Opn2} and the improved algorithm in \cite{shah02undiscretized}.

The \emph{multi-objective location {and path planning} problem on a {tree}}, i.e., on a cycle-free, connected graph $G=(V,E)$ with {demands} $w_i\geq 0$ for $v_i\in V$, $i=1,\dots,n$, preference matrix $A\in\A_p$, and with edge costs $c(e)\in\R^{p}_\geq$ for $e=[v_i,v_j]\in E$ can be formulated as 
\begin{equation}\label{eq:Tree}\tag{T}
	\begin{array}{rl}
		\min & \displaystyle f(z)=\sum_{i=1}^n w_i\cdot A\cdot c(\selectedPathi)\\
		\text{s.\,t.} & \sol\in \X,
	\end{array}
\end{equation}
where $\selectedPathi=(v=v_{i_1},\dots,v_{i_L}=v_i)$ denotes the \emph{unique} path from the facility location $\facloc$ to node $v_i$ for all $i=1,\dots,n$.
The total cost of this path is given by $c(\selectedPathi)= \sum_{k=1}^{L-1} c([v_{i_k},v_{i_{k+1}}])$.

Problem~\eqref{eq:Tree} can be solved by Algorithm~\ref{alg:tree}, which is an adapted version of the algorithm in \cite{Goldman71} such that all (Pareto) optimal edges are returned, if there is not a single (Pareto) optimal node. The algorithm successively selects leaf nodes and checks whether they accumulate at least half of the total {demand}. Otherwise, it merges the leaf node with its {unique adjacent} node until one node accumulates at least half of the total {demand}. Note that the non-negative edge costs and the preference matrix have no influence on the (Pareto) optimal node.
\begin{algorithm}[ht!]
    \caption{Adaption of the Algorithm of \cite{Goldman71}}\label{alg:tree}
    \SetAlgoLined 
	\KwIn{tree $G=(V,E)$ with $|V|=n$, with {demands} $w_i\geq 0$ for each node $v_i$, $i=1,\dots,n$}
        $W\coloneqq\sum_{i=1}^n w_i$\;
        \While{\textbf{true}}{
        Choose a leaf $v_k\in V$ with incident edge $e=[v_k,v_j]\in E$\;
        \uIf{$w_k>\frac{W}{2}$}{\Return $v_k$ and stop}
        \uElseIf{$w_k=\frac{W}{2}$}{
        $N=\{v_j\}$, $E^*=\{[v_k,v_j]\}$\; \While{$N\neq\emptyset$}{Choose node $v_n\in N$\; \uIf{$w_n=0$}{Add all neighbors of node $v_n$ to $N$ and delete $v_n$ from $N$\; Add all edges, which are incident to $v_n$ to $E^*$} }\Return $E^*$ and stop}
        \uElse{
        $w_j\coloneqq w_j+w_k$\ \tcp*[r]{merge leaf with adjacent node $v_j$}    
        $V\coloneqq V\setminus\{v_k\}$\;
        $E\coloneqq E\setminus\{[v_k,v_j]\}$\;}}
\end{algorithm}
The correctness of the algorithm is based on the fact that the distances $d_T$ satisfy the triangle inequality:
\begin{lem}\label{lem:triangle}
    Let $P_j(v_i)$ denote the uniquely defined path from $v_i\in V$ to $v_j\in V$.
    Then, the distances $d_T(v_i,v_j)=c(P_j(v_i))\in \R^p_\geq$ satisfy the triangle inequality, i.\,e., it holds that 
    $d_T(\facloc,v_i) \leq d_T(\facloc,v_k)+d_T(v_k,v_i)$ for any node $v_k\in V$.
\end{lem}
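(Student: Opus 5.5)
We prove the inequality componentwise. The only facts needed are that the tree has unique paths and that all edge costs are non-negative. Fix $\facloc, v_i, v_k \in V$, and write $P_k(\facloc)$, $P_i(v_k)$ and $P_i(\facloc)$ for the unique paths between the respective nodes. The idea is to show that the edge set of $P_i(\facloc)$ is contained in the union of the edge sets of $P_k(\facloc)$ and $P_i(v_k)$. Once this holds, summing non-negative edge costs gives
\[
d_T(\facloc,v_i)=\sum_{e\in P_i(\facloc)} c(e)\leq \sum_{e\in P_k(\facloc)} c(e)+\sum_{e\in P_i(v_k)} c(e)=d_T(\facloc,v_k)+d_T(v_k,v_i)
\]
componentwise, since every edge of $P_i(\facloc)$ appears at least once on the right and the extra terms lie in $\R^p_\geq\cup\{0\}$.

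To get the containment, concatenate $P_k(\facloc)$ and $P_i(v_k)$ into a walk $W$ from $\facloc$ to $v_i$. Next, reduce $W$ to a simple path by repeatedly deleting closed subwalks: whenever a node occurs twice, remove everything between the two occurrences. Each deletion only removes edges, so the resulting simple $(\facloc,v_i)$-path uses a subset of the edges of $W$. Because $G$ is a tree, this simple path must be $P_i(\facloc)$, which gives the containment.

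Alternatively, one can use the standard tree argument via the meeting node. Let $u$ be the first node of $P_i(v_k)$, starting from $v_k$, that lies on $P_k(\facloc)$ reversed. Then $P_i(\facloc)$ equals the portion of $P_k(\facloc)$ from $\facloc$ to $u$ followed by the portion of $P_i(v_k)$ from $u$ to $v_i$. The discarded part is the segment $u$–$v_k$, which is traversed twice.

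The only step requiring care is the claim that reducing a walk in a tree yields exactly the unique path, i.e., that no edge outside $W$ can be needed. This follows from the fact that a walk between two nodes always contains a path between them, combined with uniqueness of paths in trees. Everything else is routine summation. Nothing about the preference matrix $A$ enters, except that $A$ has non-negative entries. The inequality is therefore preserved under multiplication by $A$, should it be needed for $A\cdot c$ costs later in the correctness proof of Algorithm~\ref{alg:tree}.
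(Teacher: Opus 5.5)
Your proof is correct, but it is organized differently from the paper's. The paper distinguishes four configurations of $v_k$ relative to $P_i(\facloc)$:
\begin{itemize}
\item $v_k$ lies on $P_i(\facloc)$;
\item $\facloc$ lies between $v_k$ and $v_i$;
\item $v_i$ lies between $\facloc$ and $v_k$;
\item there is a branching node $v_h$.
\end{itemize}
In the last case it derives the exact identity $d_T(\facloc,v_k)+d_T(v_k,v_i)=d_T(\facloc,v_i)+2\,d_T(v_h,v_k)$.

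You instead prove the single containment $E(P_i(\facloc))\subseteq E(P_k(\facloc))\cup E(P_i(v_k))$. You get it by reducing the concatenated walk to a simple path and invoking uniqueness of paths in trees, and then you sum the non-negative edge costs. What your route buys: it needs no case distinction, so there is no risk of missing a configuration. It also makes plain that only uniqueness of paths and $c(e)\ge 0$ are used. What the paper's route buys: its case analysis names the slack explicitly as twice the cost of the detour from the branching node to $v_k$.

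There is one slip in your alternative meeting-node argument. As written, $u$ is the first node of $P_i(v_k)$, starting from $v_k$, that lies on $P_k(\facloc)$. That node is $v_k$ itself, since $v_k$ is an endpoint of $P_k(\facloc)$. Your claimed decomposition would then be the whole walk, which is not a path in general. You need the \emph{last} such node, or equivalently the first such node when traversing $P_i(v_k)$ from $v_i$. With that correction, $u$ is exactly the paper's $v_h$. The degenerate positions $u=v_k$, $u=\facloc$ and $u=v_i$ then recover the paper's first three cases. Your main walk-reduction argument does not depend on this alternative, so the proof stands.
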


\begin{figure}[ht!]
 \centering
 \subcaptionbox{Node $v_k$ lies on the path from $\facloc$ to $v_i$. \label{fig:triangle1}}
	[.45\linewidth]{\begin{tikzpicture}[every node/.style={inner sep=0pt, minimum size = 20pt}]
\node[draw,circle] (1) at (0,0)[] {$\facloc$};
\node[draw,circle] (2) at (2,0) {$v_k$};
\node[draw,circle] (3) at (4,0) {$v_i$};

\graph {
(1) --[thick,draw=black] (2);
 (2) --[thick,draw=black] (3);
};
\end{tikzpicture} }\hspace{0.5cm}
        \subcaptionbox{Node $v_k$ lies on the other side of $\facloc$ than $v_i$.\label{fig:triangle2}}
	[.45\linewidth]{\begin{tikzpicture}[every node/.style={inner sep=0pt, minimum size = 20pt}]
\node[draw,circle] (1) at (0,0)[] {$v_k$};
\node[draw,circle] (2) at (2,0) {$\facloc$};
\node[draw,circle] (3) at (4,0) {$v_i$};

\graph {
(1) --[thick,draw=black] (2);
 (2) --[thick,draw=black] (3);
};
\end{tikzpicture} }\\
	\subcaptionbox{Node $v_k$ lies on the other side of $v_i$ than $\facloc$.\label{fig:triangle3}}
	[.45\linewidth]{\begin{tikzpicture}[every node/.style={inner sep=0pt, minimum size = 20pt}]
\node[draw,circle] (1) at (0,0)[] {$\facloc$};
\node[draw,circle] (2) at (2,0) {$v_i$};
\node[draw,circle] (3) at (4,0) {$v_k$};

\graph {
(1) --[thick,draw=black] (2);
 (2) --[thick,draw=black] (3);
};
\end{tikzpicture} }\hspace{0.5cm}
	\subcaptionbox{The shortest path from $\facloc$ to $v_k$ uses at least one but not all edges of the path $\selectedPathi$\label{fig:triangle4}}
	[.45\linewidth]{\begin{tikzpicture}[every node/.style={inner sep=0pt, minimum size = 20pt}]
\node[draw,circle] (1) at (0,0)[] {$\facloc$};
\node[draw,circle] (2) at (2,0) {$v_h$};
\node[draw,circle] (4) at (2,-1) {$v_k$};
\node[draw,circle] (3) at (4,0) {$v_i$};

\graph {
(1) --[thick,draw=black] (2);
 (2) --[thick,draw=black] (3);
  (2) --[thick,draw=black] (4);
};
\end{tikzpicture} }
	\caption{All possibilities how a node $v_k\in V\setminus\{\facloc,v_i\}$ can lie in relation to the path $\selectedPathi$ from $v_i$ to $\facloc$ in a tree.}
	\label{fig:trianlge}
\end{figure}

\begin{proof}
There exist four different possibilities on how the node $v_k$ lies in relation to the path from $\facloc$ to $v_i$, which are visualized in Figure~\ref{fig:trianlge}. The node $v_k$ can either be part of the path from $\facloc$ to $v_i$, see Figure~\ref{fig:triangle1}, then it holds that $d_T(\facloc,v_i)= d_T(\facloc,v_k)+d_T(v_k,v)$. 

For the following cases we always use the fact that all components of the edge costs are greater or equal to zero, i.\,e., $c(e)\in\R_\geq^p$ for all $e\in\E$.
 If $\facloc$ is on the path from $v_k$ to $v_i$, see Figure~\ref{fig:triangle2}, then it follows that $d_T(\facloc,v_i)\leq d_T(v_k,v_i)\leq d_T(\facloc,v_k)+d_T(v_k,v_i)$.
 
 If $v_i$ is included in the path from $\facloc$ to $v_k$, see Figure~\ref{fig:triangle3}, then $d_T(\facloc,v_i)\leq d_T(\facloc,v_k)\leq d_T(\facloc,v_k)+d_T(v_k,v_i)$. 

In the last case, see Figure~\ref{fig:triangle4}, there exists a node $v_h$ with $v_h\neq \facloc$, $v_h\neq v_k$ and $v_h\neq v_i$ in the unique path from $\facloc$ to $v_k$ such that all nodes on the path from $v_h$ to $v_i$ are not included in the path from $\facloc$ to $v_k$. Then it holds that $d_T(\facloc,v_i)= d_T(\facloc,v_h) + d_T(v_h,v_i) \leq d_T(\facloc,v_h) +2\, d_T(v_h,v_k) + d_T(v_h,v_i)=d_T(\facloc,v_k)+d_T(v_k,v_i)$, which concludes the proof.
\end{proof}

\begin{theorem}
    Algorithm~\ref{alg:tree} computes the set of (Pareto) optimal locations of the {multi-objective} location {and path planning} problem~\eqref{top1} on a tree.
\end{theorem}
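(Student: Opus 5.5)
We prove correctness by showing that each merge step of Algorithm~\ref{alg:tree} preserves the set of (Pareto) optimal locations, and that the termination conditions return exactly this set. The basic tool is an exchange argument across a single edge, as in Theorem~\ref{thm:nodes} and Theorem~\ref{thm:line}. In a tree, paths are unique, so $f(z)$ depends only on the location $\facloc$. Consider any edge $e=[v_a,v_b]$. Removing $e$ splits $V$ into $V_a\ni v_a$ and $V_b\ni v_b$, with total demands $W_a$ and $W_b$. For every node $u\in V_a$, the path from $u$ to any node of $V_b$ passes through $v_a$ and then $e$. Hence moving the facility from $u$ toward $v_b$ across $e$ changes the objective by exactly $(W_a-W_b)\,A\,c(e)$ plus the change incurred inside $V_a$. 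Lemma~\ref{lem:triangle} provides the needed monotonicity: for $u\in V_a$ we have $d_T(u,v_i)=d_T(u,v_a)+d_T(v_a,v_i)$ for all $v_i\in V_b$, and $d_T(v_a,v_i)\le d_T(v_a,u)+d_T(u,v_i)$ for $v_i\in V_a$.

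\textbf{Key step (leaf elimination).} Let $v_k$ be a leaf with neighbour $v_j$ and $w_k<W/2$. For every location $u\neq v_k$, we get $f(v_k)-f(u)$ by splitting the customers into $\{v_k\}$ and the rest. The rest has demand $W-w_k>w_k$ and pays an additional $A\,c([v_k,v_j])$, while $v_k$ saves at most that amount. Concretely, $f(v_k)=f(v_j)+(W-2w_k)\,A\,c([v_k,v_j])$, which is $\ge f(v_j)$ componentwise with strict inequality in some component because $c(e)\in\R^p_\geq$ and $A$ has unit diagonal. So $v_k$ is dominated by $v_j$, and no interior point of the edge is optimal by Theorem~\ref{thm:nodes}. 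Next, for every location $u\neq v_k$ the cost contribution of $v_k$ equals $w_k\,A\,c([v_k,v_j])+w_k\,A\,d_T(v_j,u)$. The first term is a constant. Therefore minimizing over $V\setminus\{v_k\}$ in the reduced tree, with $w_j$ replaced by $w_j+w_k$, gives objective vectors shifted by a common constant vector, and dominance is preserved. By induction on $|V|$, the set of Pareto optimal locations is invariant under the merge step. The algorithm terminates because $|V|$ decreases, and with one node left its weight is $W>W/2$.

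\textbf{Termination cases.} If the accumulated weight of a leaf satisfies $w_k>W/2$, the identity above with the inequality reversed shows that every neighbour of $v_k$ is strictly dominated by $v_k$. Propagating this along any path away from $v_k$, every edge $[x,y]$ with $x$ closer to $v_k$ has a $y$-side weight of at most $W-w_k<W/2$. So each step away from $v_k$ strictly worsens $f$ by a vector in $\R^p_\geq$, and the transitivity of componentwise dominance yields that $v_k$ is the unique Pareto optimal location. If $w_k=W/2$, the edge formula gives $f(v_k)=f(v_j)$, and by Theorem~\ref{thm:nodes} the whole edge is optimal. Continuing from $v_j$ across further edges $[v_j,y]$, the side containing $v_k$ has weight $\ge W/2$. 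Equality holds precisely when all nodes absorbed into $v_j$ other than $v_k$, together with $v_j$, have zero demand, i.e.\ when the current merged weight of $v_j$ is $0$. This is exactly the expansion rule of the inner while-loop. Every edge added therefore has a balanced split and is optimal, and any edge not reached has a strictly unbalanced split, so it is dominated by moving toward the heavier side.

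\textbf{Main obstacle.} The expected difficulty is the $w_k=W/2$ case. There we must argue carefully that the inner loop collects exactly the edges whose removal splits the demand evenly, in terms of merged weights. We must also check that the strict dominance argument still works when some components of $A\,c(e)$ vanish. This is guaranteed because $A\,c(e)\neq0$, since $c(e)\neq 0$, $A\ge0$ and $A$ has unit diagonal.
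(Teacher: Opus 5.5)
\textbf{There is a genuine gap, in the tie case $w_k=W/2$, and it reflects a defect in Algorithm~\ref{alg:tree} itself.}

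What works: your edge identity $f(v_b)-f(v_a)=(W_a-W_b)\,A\,c(e)$ is the right tool. The leaf-elimination step (a constant shift by $w_k\,A\,c([v_k,v_j])$) and the case $w_k>W/2$ are fine, and more explicit than the paper's proof, which only appeals to Goldman's argument via Lemma~\ref{lem:triangle}.

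Where it breaks: you claim that equality holds ``precisely when the current merged weight of $v_j$ is $0$''. If you cut an edge $[v_j,y]$, the side containing $v_k$ consists of $v_k$, $v_j$ \emph{and all other branches at $v_j$}. Since leaves are chosen in arbitrary order, these branches may still carry positive, not yet merged demand. So $w_j=0$ does not make $[v_j,y]$ balanced, and your conclusion that ``every edge added therefore has a balanced split'' is false.

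A counterexample to the algorithm: take a star with centre $v_j$ of demand $0$ and leaves $v_k,y_1,y_2$ with demands $2,1,1$ (so $W=4$), all edge costs $(1,1)^\top$, and $A=I$. Choosing $v_k$ first gives $w_k=W/2$, and since $w_j=0$ the inner loop puts $[v_j,y_1]$ and $[v_j,y_2]$ into $E^*$. But $f(v_k)=f(v_j)=(4,4)^\top$ and $f(y_1)=f(y_2)=(6,6)^\top$. The point at fraction $\alpha$ from $v_j$ on $[v_j,y_1]$ has value $(4+2\alpha)(1,1)^\top$, so these edges are dominated by $v_j$. Read literally, the inner loop does not even terminate, because a node of positive demand (e.g.\ $v_k$, re-added as a neighbour of $v_j$) is never removed from $N$. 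So the statement cannot be proved for the algorithm as written, and the paper's proof does not cover this branch either.

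How to repair it: your identity gives the correct rule. In the tie case, an edge $[x,y]$ with $x$ on the $v_k$-side is optimal if and only if the $y$-side carries demand exactly $W/2$. Equivalently, every node on the $x$-side other than $v_k$ has merged demand $0$. Two such $y$-side subtrees of demand $W/2$ cannot be disjoint, so these edges form a path starting with $[v_k,v_j]$. The expansion may therefore leave a reached node $x$ only across the unique edge whose far side carries all of $W/2$; its existence forces $w_x=0$ and zero demand on all other branches of $x$ away from $v_k$. With this rule, and with processed nodes removed from $N$, your proof goes through.

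A minor point: Theorem~\ref{thm:nodes} only says that an efficient interior point makes the whole edge efficient, so it cannot certify $[v_k,v_j]$ from $f(v_k)=f(v_j)$. What certifies it is your monotonicity remark: every edge directed away from $v_k$ has $v_k$-side demand at least $W/2$, so $f(v_j)$ is the componentwise minimum over all nodes. Combine this with the fact that $f$ is affine along each edge.
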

\begin{proof}
    This follows in analogy to the single-objective median location problems on trees, see \cite{Goldman71}, as the {multi-objective} distances on trees still satisfy the triangle inequality, see~Lemma~\ref{lem:triangle}.
\end{proof}

\section{Solution Strategies for Bi-objective Location {and Path Planning} Problems on {General} Graphs}\label{sec:generalgraphs}
For general graphs, we restrict ourselves to bi-objective location {and path planning} problems {in order to keep the concept of consistent route choices and associated preferences simple. Indeed},  a consistency definition with $p>2$ objectives would require the definition of marginal trade-offs between all pairs of objectives. {Such pairwise trade-offs} would restrict the {original cone} $\R^p_\geq$ of possible weight {vectors $\lambda$ in Definition~\ref{def:consistency}} in a way that {it may have} more {than $p$} extreme rays, {i.e., more} than the dimension of the outcome space. This would, in general, change the dimension of the preference matrix $A$, that {represents} the cone {of possible weight vectors $\lambda$} by its facets. 
{For a discussion of} the case of considering only weights between consecutive objective functions, the transformation matrix $A$ is introduced in \cite{klamroth2026adapting}.

Let $G=(V,E)$ be a {simple and connected} graph with bi-objective edge costs $c(e)\in\R^2_\geq$. 
Recall that the bi-objective shortest paths between the new facility and the customers are, in general, not unique. To solve the bi-objective location and path planning problem \eqref{BLP} without any consistency requirements w.\,r.\,t.\ the path choices, the two-phase algorithm introduced in \cite{Skriver2004Bicriteria} can be applied. However, since we focus on solutions for which the path choices are consistent over all customers, this method is not applicable here.
In order to incorporate the consistency requirements, we propose an alternative algorithm that iteratively selects paths and adapts the preference matrix $A$, {iteratively} including the preference information {that is induced by the partial route choice decisions taken so far} for the subsequent path selections. 
To incorporate such preference information  in the preference matrix, we set
\begin{equation}\label{Awg}
    A={\begin{pmatrix}
        a_{11}&a_{12}\\ a_{21}&a_{22}
    \end{pmatrix}=}\begin{pmatrix}
        1&\beta\\ \g&1
    \end{pmatrix} \in\A_2,
\end{equation}
initialized with $A^0=(\begin{smallmatrix} 1 & \beta_0 \\\gamma_0 &1 \end{smallmatrix})=(\begin{smallmatrix} 1 & 0 \\0 &1 \end{smallmatrix})=I$ for general bi-objective problems, and initialized with $A^0=(\begin{smallmatrix} 1 & \beta_0 \\\gamma_0 &1 \end{smallmatrix})=(\begin{smallmatrix} 1 & 1 \\0 & 1 \end{smallmatrix})=\At$ for ordinal optimization problems. In order to obtain non-contradicting preferences, we ensure that $\beta\cdot\g\leq1$ at all times. If there exists only one efficient $(\facloc,v_i)$-path, we do not gain any preference information by selecting this specific path; hence, we do not update the parameters in the preference matrix. Otherwise, if there exist multiple efficient $(\facloc,v_i)$-paths, selecting one path yields a preference information that can be incorporated into the preference matrix $A$. {Overall, we aim at the computation of \emph{all} (extreme) supported non-dominated outcome vectors in the objective space together with a minimum complete set of Pareto optimal solutions, i.e., one location and routing solution per outcome vector. Hence, we apply backtracking to generate a minimum complete set of Pareto optimal location and routing solutions.} 

{In the following, we discuss the update scheme for the preference matrix $A$ given some (additional) preference information. Towards this end,} 
let $\Path$ and $\AltPath$ be two efficient paths w.\,r.\,t.\ the objective function $A^0\cdot c(\cdot)$, which implies that $A^0 c(\Path)\nleq A^0 c(\AltPath)$ and $A^0 c(\AltPath)\nleq A^0 c(\Path)$ and {$A^0 c(\Path)\neq A^0 c(\AltPath)$}. If we select path $\Path$, we have to adapt the values $\beta$ and $\gamma$ such that $\Path$ dominates {(or is equivalent to)} $\AltPath$ in the {updated} objective function $\Aup{\cdot c(\cdot)}$, i.\,e., $\Aup c(P)\leq \Aup c(Q)$. Since $A^0 c(\Path)$ and $A^0 c(\AltPath)$ are pairwise non-dominated, we distinguish two cases:

{Case 1:} 
\begin{align}
    &c_1(\Path) + \beta_0\, c_2(\Path) > c_1(\AltPath) + \beta_0\, c_2(\AltPath), \quad\text{{and}} \label{eq:pref_beta0}\\
    &\g_0\, c_1(\Path) + c_2(\Path) < \g_0\, c_1(\AltPath) + c_2(\AltPath). \label{eq:pref_g0}
\end{align}
Consequently, we only need to update $\beta$, since \eqref{eq:pref_g0} already complies with the preference of \(\Path\) {over $\AltPath$ with the current value of} \(\g_0\). Under the assumptions that the paths $P$ and $Q$ are pairwise non-dominated w.\,r.\,t.\ $A^{{0}}\, c(\cdot)$ and that $\beta_0\cdot\g_0\leq 1$, equations~\eqref{eq:pref_beta0} and \eqref{eq:pref_g0} are equivalent to $c_1(\Path)>c_1(\AltPath)$ and $c_2(\Path)<c_2(\AltPath)$. 
The new value of $\beta$ should satisfy:
\begin{align}
    & c_1(\Path) + \beta\, c_2(\Path) \leq c_1(\AltPath) + \beta\, c_2(\AltPath) \notag\\
    \iff & \frac{c_1(\Path)-c_1(\AltPath)}{c_2(\AltPath)-c_2(\Path)}\leq \beta. \label{preference1}
\end{align}
Inequality \eqref{preference1} defines the marginal ratio such that $\Path$ is preferred over $\AltPath$. {Hence,} if $c_1(\Path) + \beta_0\, c_2(\Path) \geq c_1(\AltPath) + \beta_0\, c_2(\AltPath)$ we update $\beta$ to
\begin{align}\label{eq:beta}
    \tilde{\beta}= \frac{c_1(\Path)-c_1(\AltPath)}{c_2(\AltPath)-c_2(\Path)} 
    \intertext{and set}
    \Aup=\begin{pmatrix}    1 & \tilde{\beta}\\ \g & 1\end{pmatrix}.\notag
\end{align}

{Case 2:}
\begin{align}
    &c_1(\Path) + \beta_0\, c_2(\Path) < c_1(\AltPath) + \beta_0\, c_2(\AltPath), \quad\text{{and}} \label{eq:pref2_beta0}\\
    &\g_0\, c_1(\Path) + c_2(\Path) > \g_0\, c_1(\AltPath) + c_2(\AltPath). \label{eq:pref2_g0}
\end{align}
In this case, we only update $\g$, as \eqref{eq:pref2_beta0} {complies with} the preference of $\Path$ over $\AltPath$ for {the current value of} $\beta_0$. {Using the same arguments as above,} inequalities \eqref{eq:pref2_beta0} and \eqref{eq:pref2_g0} are equivalent to $c_1(\Path)<c_1(\AltPath)$ and $c_2(\Path)>c_2(\AltPath)$. Thus, $\g$ should satisfy
\begin{align}
     & \g\, c_1(\Path) + c_2(\Path) \leq \g\, c_1(\AltPath) + c_2(\AltPath) \notag\\
    \iff &\frac{c_2(\Path)-c_2(\AltPath)}{c_1(\AltPath)-c_1(\Path)} \leq \g. \label{preference2}
\end{align}
{Hence,} if $\g_0\, c_1(\Path) + c_2(\Path) \geq \g_0\, c_1(\AltPath) + c_2(\AltPath)$ we update $\g$ to
\begin{align}\label{eq:gamma}
    \tilde{\g}= \frac{c_2(\Path)-c_2(\AltPath)}{c_1(\AltPath)-c_1(\Path)}
    \intertext{and obtain the updated preference matrix}
    \Aup=\begin{pmatrix} 1 & \beta\\ \tilde{\g} & 1 \end{pmatrix}.\notag
\end{align}
{Then, after selecting path $\Path$ over path $\AltPath$,} we replace the objective function with $\Aup\cdot c(\cdot)$ when evaluating facility-to-customer paths with respect to the other nodes $\facloc'\in V\setminus\{\facloc,v_i\}$. By iteratively updating the preference matrix, we are thereby able to construct vertex consistent solutions. {Backtracking, i.e., eventually considering the alternative preference of selecting path $\AltPath$ over path $\Path$, yields a complete set of (extreme) supported non-dominated outcome vectors. Note that, as a by-product,} we implicitly determine the intervals of the weight space decomposition.

\subsection{Algorithmic Approach}\label{sec:alg}

We introduce an algorithm that determines {a minimum complete set of,} both, strongly consistent~\nameref{stronglyconsistent} and consistent~\nameref{consistent} solutions for the bi-objective location and path planning problem (\ref{BLP}). While the two-phase algorithm introduced in \cite{Skriver2004Bicriteria} can be adapted to determine the extreme supported, i.\,e., strongly consistent {outcome vectors}, {it can,} in general, not determine all {outcome vectors} that are consistent but not strongly consistent. {To overcome these shortcomings}, we first propose a backtracking algorithm to determine {a minimum complete set of} strongly consistent solutions, where the implied preference {is} derived from the preference matrix. Then, we extend {this} algorithm in order to derive {a minimum complete set of} consistent solutions in \Cref{app:nonex}. Note that the fact that our algorithm can easily be adapted to obtain {a minimum complete set of} consistent solutions is an advantage over dichotomic search based approaches.

{The general framework is outlined in \Cref{alg:DP}, and the implementation of associated path computations under consistency requirements is detailed in \Cref{alg:CSP}.}

\begin{algorithm}[h!]
\caption{Node-based Algorithm}\label{alg:DP}
    \KwIn{graph $G=(V,E)$ with $|V|=n$, objective function $c:\X\rightarrow \R^2_{\geq}$, initial preference matrix $A_{}^{{0}}\in\A_2$}
    \KwOut{{minimum complete} set of strongly consistent solutions $\SCon$ }
    $S_v\coloneqq \emptyset\quad \forall\:v\in V$\;
    \Forall{$\facloc\in V$}{
    Compute the sets of shortest paths $\ivPaths$ from $\facloc$ to $v_i$ for all $v_i\in V\setminus\{\facloc\}$ with objective function $A_{}^{{0}}\cdot c(\cdot)$\;
    $\allPaths\coloneqq(\mathcal{P}_1(v),\dots,\mathcal{P}_n(v))$\;
$\VSCon\coloneqq \text{sCSP}(G,c,\facloc,\allPaths,A_{}^{{0}},(\facloc,\emptyset),\emptyset)$\;
\Forall{$\sol\in\VSCon$}{
    $f(z)\coloneqq A_{}^{{0}} \cdot c(\selectedPaths)$\;
    $S_v\coloneqq S_v\cup(f(z),z)$\;
}
    }
    Filter the set $\SCon\coloneqq \bigcup_{\facloc\in V}S_v$ wrt.\ Pareto dominance\\
    \Return $\SCon$
\end{algorithm} 
\Cref{alg:DP} is a node-based algorithm that returns {a minimum complete} set of strongly consistent solutions \nameref{stronglyconsistent}. It considers each node $\facloc\in V$ as a possible facility location and computes {a minimum complete set of Pareto optimal} facility-to-customer paths from $\facloc$ to every other node $v_i\in V$ in $G$ with respect to the objective function $A_0\cdot c(\cdot)$. 
{A minimum complete set of Pareto optimal} paths can be computed using the multi-objective Dijkstra algorithm proposed in \cite{MARISTANYDELASCASAS2021105424}.
The resulting sets of {initial Pareto optimal} paths $\ivPaths$, $i=1,\dots,n$, are stored in $\allPaths$. {Note that these paths are used for initialization purposes and to simplify further computations. However, not every combination of such paths reflects consistent preferences in general.}

Then, \Cref{alg:DP} determines the set of strongly vertex consistent solutions $\VSCon$ {by recursively calling} \Cref{alg:CSP}, computes the objective function values $f(z)$ for all $\sol\in\VSCon$ and stores them in a set $S_v$.
Finally, the union of all sets $S_v$ is filtered w.\,r.\,t.\ Pareto dominance to obtain {a minimum complete} set of strongly consistent solutions $\SCon$. An efficient algorithm for the Pareto filtering of the union of non-dominated sets can be found in \cite{KLAMROTH2024106506}.

\begin{algorithm}[h!]
\caption{Strongly Consistent Shortest Paths \\ \hspace*{6.8em}${\text{sCSP}(G,c,\facloc,\allPaths,A,G_c,\VSCon)}$}\label{alg:CSP}
    \KwIn{graph $G=(V,E)$, objective function $c:\X\rightarrow \R^2_{\geq}$, facility location $\facloc\in V$, $n$-tuple $\allPaths$ containing {Pareto optimal} paths, preference matrix $A\in\A_2$, {graph representing already selected paths} $G_c=\parS$, 
    {partial} set of strongly vertex consistent solutions $\VSCon$ for the facility location $\facloc$}
    \KwOut{{minimum complete} set of strongly vertex consistent solutions $\VSCon$ for the facility location $\facloc$}
Choose $v_i\in V\setminus\Vc$ using a suitable selection strategy\;
Extract the $v$-$v_i$-paths $\ivPaths$ from $\allPaths$ \; 
        Filter $\ivPaths$ for non-dominance wrt.\ $A\cdot c{(\cdot)}$ \;
        \Forall{$\Path\in\ivPaths$ with $\Path=(\VP,\EP)$}{
            $\Aup\coloneqq \text{update}(A,\Path,\ivPaths,c)$\;
            \If{$\aup_{1,2}\cdot \aup_{2,1}<1$}{
                \If{$\Vc\cup\VP\neq V$}{
                $\text{sCSP}(G,c,\facloc,\allPaths,\Aup,(\Vc\cup\VP,\Ec\cup\EP),\VSCon)$\;
                }
                \Else{
                    {Extract the set $\selectedPaths$  from $(\Vc\cup\VP,\Ec\cup\EP)$}\;
                    $\VSCon\coloneqq\VSCon\cup{\{(\facloc,\selectedPaths)\}}$\;
                }
            }    
        }
\Return $\VSCon$\;
\end{algorithm}

\Cref{alg:CSP} implements the iterative construction of strongly vertex consistent solutions $\VSCon$ for a given facility location $v\in V$. {For this purpose, we iteratively update} {a subgraph $G_c=\parS$ of $G$ that reflects the partial path choices, together with} the {current status of the} preference matrix $A\in\A_2$.
{To extend such a partial solution with paths to customers that are not yet covered,} we choose a node $v_i\in V\setminus\{V_c\}$ using a suitable selection strategy and extract the set of {Pareto optimal} paths $\ivPaths$ to node $v_i$, {which we use as candidate paths to reach node $v_i$}. The significance of the selection process is discussed in \Cref{rem:selectionstrategy}.

\begin{algorithm}[ht!]

\caption{$\update(A,\Path,\ivPaths,c)$}\label{alg:update}
    \KwIn{preference matrix $A\in\A_2$, selected path $\Path\in\ivPaths$, set of paths $\ivPaths$, objective function $c:\X\rightarrow \R^2_{\geq}$}
\KwOut{updated preference matrix $A\in\A_2$}
    $\beta\coloneqq {a}_{1,2}$\;
    $\g\coloneqq {a}_{2,1}$\;
    \Forall{$\AltPath\in \ivPaths\setminus\{\Path\}$}{
            \uIf{$c_1(\Path)<c_1(\AltPath)$}{
$\gamma\coloneqq \max\Big\{\gamma,\frac{c_2(\Path)-c_2(\AltPath)}{c_1(\AltPath)-c_1(\Path)}\Big\}$\tcp*[r]{\Cref{eq:gamma}}\;
                \vspace{-1em}
                }
                \ElseIf{$c_2(\Path)<c_2(\AltPath)$}{
                $\beta\coloneqq \max\Big\{\beta,\frac{c_1(\Path)-c_1(\AltPath)}{c_2(\AltPath)-c_2(\Path)}\Big\}$\tcp*[r]{\Cref{eq:beta}}\;
                \vspace{-1em}
}
        }
    ${a}_{1,2}\coloneqq \beta$\;
    ${a}_{2,1}\coloneqq \g$\;
    \Return $A$\;
\end{algorithm} 
For every path $\Path\in\ivPaths$ {that is selected in Line~4 of \Cref{alg:CSP}}, we update the preference matrix~$A$ using \Cref{alg:update}. Here, we use \Cref{eq:beta,eq:gamma} to update the parameters of the preference matrix. Note that as we might compare several {Pareto optimal} paths, we take the maximum over all possible updates for $\beta$ and $\gamma$. We then check whether the updated preference matrix $\Aup$ is feasible by evaluating the condition $\beta\cdot\gamma<1$. By demanding a strict inequality, we exclude solutions that are vertex consistent but not strongly vertex consistent. 
If the updated preference matrix $\Aup$ is infeasible (i.\,e., represents contradicting preferences), we discard the path $\Path$ {(since it can not be used to consistently extend the current partial solution)} and continue with another path $\Path\in\ivPaths$. Otherwise, if there are nodes that have not been {covered by} the {current partial} solution, we iteratively extend the solution by nodes and edges using \Cref{alg:CSP} with the updated matrix $\Aup$ and the partial solution defined by path $\Path=(\VP,\EP)$. In every consecutive iteration, we select another, not yet covred node $v_i\in V$ with the chosen selection strategy and evaluate the {Pareto optimal} paths with respect to the updated objective function $\Aup\cdot c (\cdot)$, incorporating the preference information derived from earlier path choices. We repeat these steps until the merged graph $(\Vc\cup\VP,\Ec\cup\EP)$ consisting of the partial strongly vertex consistent {paths stored in} $(\Vc,\Ec)$ and the added path $P=(\VP,\EP)$ covers all nodes. Using the updated preference matrix $\Aup$ we guarantee that every path selection is consistent with all previously selected paths. As soon as all nodes are covered by the solution {the corresponding paths to all customers can be extracted from the merged graph $(\Vc\cup\VP,\Ec\cup\EP)$}, \Cref{alg:CSP} returns the set of all strongly vertex consistent solutions $\VSCon$.

\begin{example}\label{ex:missing_nonextreme}
    \begin{figure}[!ht]
        \centering
        \begin{tikzpicture}

\node[draw,circle,thick,fill=black!40!green] (1) at (0,0)[] {$v_1$};
\node[draw,circle,thick,fill=black!10] (2) at (0,2) {$v_2$};
\node[draw,circle,thick,fill=black!10] (3) at (2,2) {$v_3$};
\node[draw,circle,thick,fill=black!10] (4) at (2,0) {$v_4$};
\node[draw,circle,thick,fill=black!10] (5) at (4,0) {$v_5$};
\node[draw,circle,thick,fill=black!10] (6) at (0,-2) {$v_6$};
\node[draw,circle,thick,fill=black!10] (7) at (2,-2) {$v_7$};

\graph {
(1) --[bend right=20,thick,draw=black!40!red,swap,"1"] (2);
(1) --[bend left=20,dotted,draw=black!50!green,thick,"2"] (2);
(2) --[bend right=20,thick,draw=black!40!red,swap,"1"] (3);
(2) --[bend left=20,dotted,draw=black!50!green,thick,"2"] (3);
(6) --[bend right=20,thick,draw=black!40!red,swap,"1"] (1);
(6) --[bend left=20,dotted,draw=black!50!green,thick,"4"] (1);
(7) --[bend right=20,thick,draw=black!40!red,swap,"1"] (6);
(7) --[bend left=20,dotted,draw=black!50!green,thick,"4"] (6);
(1) --[bend right=20,thick,draw=black!40!red,swap,"1"] (4);
(1) --[bend left=20,dotted,draw=black!50!green,thick,"3"] (4);
(4) --[bend right=20,thick,draw=black!40!red,swap,"1"] (5);
(4) --[bend left=20,dotted,draw=black!50!green,thick,"3"] (5);

};

\end{tikzpicture}

         \caption{Ordinal graph $G=(V,E)$ with the facility location at node $v_1$, {c.f.\ Figure~\label{fig:bsp_julia}. All demands are supposed to be equal to one, i.e., $w_i=1$ for all $i=1,\dots,7$.}}
        \label{fig:bsp_julia2}
    \end{figure}

    We illustrate the idea of \Cref{alg:CSP,alg:update,alg:DP} using the ordinal graph depicted in \Cref{fig:bsp_julia2} as introduced in \Cref{ex:julia}.
    Let $\facloc=v_1$ be the {selected} facility location and initialize the preference matrix as $\At=\bigl(\begin{smallmatrix} 1 & 1\\ 0& 1 \end{smallmatrix}\bigr)$. 
    Choose $v_i=v_2$. There are two path alternatives with counting vectors $(0,1)^\top$ and $(2,0)^\top$, 
    and both of them are eligible path choices since
    \begin{align*}
        \begin{pmatrix}
            1 & 1\\0&1
        \end{pmatrix}\cdot\begin{pmatrix}
            0\\1
        \end{pmatrix}&=\begin{pmatrix}
            1\\1
        \end{pmatrix}
        \quad\text{and}\quad 
        \begin{pmatrix}
            1 & 1\\0&1
        \end{pmatrix}\cdot\begin{pmatrix}
            2\\0
        \end{pmatrix}=\begin{pmatrix}
            2\\0
        \end{pmatrix}.
    \end{align*}
    \begin{figure}[b!]
        \centering
        \begin{tabular}{cc@{\hspace{0.5cm}}cc}
        $\begin{pmatrix}
            9\\9
        \end{pmatrix}$&\resizebox{0.35\linewidth}{!}{
        \begin{tikzpicture}
\useasboundingbox (-0.25,-2.25) rectangle (4.5,2.5);
\node[draw,circle,thick,fill=black!40!green] (1) at (0,0)[] {$v_1$};
\node[draw,circle,thick,fill=black!10] (2) at (0,2) {$v_2$};
\node[draw,circle,thick,fill=black!10] (3) at (2,2) {$v_3$};
\node[draw,circle,thick,fill=black!10] (4) at (2,0) {$v_4$};
\node[draw,circle,thick,fill=black!10] (5) at (4,0) {$v_5$};
\node[draw,circle,thick,fill=black!10] (6) at (0,-2) {$v_6$};
\node[draw,circle,thick,fill=black!10] (7) at (2,-2) {$v_7$};

\graph {
(1) --[bend left=20,thick,draw=black!40!red,"1"] (2);
(2) --[bend left=20,thick,draw=black!40!red,"1"] (3);
(6) --[bend left=20,thick,draw=black!40!red,"1"] (1);
(7) --[bend left=20,thick,draw=black!40!red,"1"] (6);
(1) --[bend left=20,thick,draw=black!40!red,"1"] (4);
(4) --[bend left=20,thick,draw=black!40!red,"1"] (5);

};

\end{tikzpicture}

 }&
        $\begin{pmatrix}
            12\\6
        \end{pmatrix}$&\resizebox{0.35\linewidth}{!}{
        \begin{tikzpicture}
\useasboundingbox (-0.25,-2.25) rectangle (4.5,2.5);
\node[draw,circle,thick,fill=black!40!green] (1) at (0,0)[] {$v_1$};
\node[draw,circle,thick,fill=black!10] (2) at (0,2) {$v_2$};
\node[draw,circle,thick,fill=black!10] (3) at (2,2) {$v_3$};
\node[draw,circle,thick,fill=black!10] (4) at (2,0) {$v_4$};
\node[draw,circle,thick,fill=black!10] (5) at (4,0) {$v_5$};
\node[draw,circle,thick,fill=black!10] (6) at (0,-2) {$v_6$};
\node[draw,circle,thick,fill=black!10] (7) at (2,-2) {$v_7$};

\graph {
(1) --[bend right=20,dotted,draw=black!50!green,thick,swap,"2"] (2);
(2) --[bend right=20,dotted,draw=black!50!green,thick,swap,"2"] (3);
(6) --[bend left=20,thick,draw=black!40!red,"1"] (1);
(7) --[bend left=20,thick,draw=black!40!red,"1"] (6);
(1) --[bend left=20,thick,draw=black!40!red,"1"] (4);
(4) --[bend left=20,thick,draw=black!40!red,"1"] (5);

};

\end{tikzpicture}
 }\\[0.75cm]
        $\begin{pmatrix}
            18\\3
        \end{pmatrix}$&\resizebox{0.35\linewidth}{!}{
        \begin{tikzpicture}
\useasboundingbox (-0.25,-2.25) rectangle (4.5,2.5);
\node[draw,circle,thick,fill=black!40!green] (1) at (0,0)[] {$v_1$};
\node[draw,circle,thick,fill=black!10] (2) at (0,2) {$v_2$};
\node[draw,circle,thick,fill=black!10] (3) at (2,2) {$v_3$};
\node[draw,circle,thick,fill=black!10] (4) at (2,0) {$v_4$};
\node[draw,circle,thick,fill=black!10] (5) at (4,0) {$v_5$};
\node[draw,circle,thick,fill=black!10] (6) at (0,-2) {$v_6$};
\node[draw,circle,thick,fill=black!10] (7) at (2,-2) {$v_7$};

\graph {
(1) --[bend right=20,dotted,draw=black!50!green,thick,swap,"2"] (2);
(2) --[bend right=20,dotted,draw=black!50!green,thick,swap,"2"] (3);
(1) --[bend right=20,dotted,draw=black!50!green,thick,swap,"3"] (4);
(4) --[bend right=20,dotted,draw=black!50!green,thick,swap,"3"] (5);
(1) --[bend right=20,thick,draw=black!40!red,"1"] (6);
(6) --[bend right=20,thick,draw=black!40!red,"1"] (7);

};

\end{tikzpicture}
 }&
        $\begin{pmatrix}
            27\\0
        \end{pmatrix}$&\resizebox{0.35\linewidth}{!}{
        \begin{tikzpicture}
\useasboundingbox (-0.25,-2.25) rectangle (4.5,2.5);
\node[draw,circle,thick,fill=black!40!green] (1) at (0,0)[] {$v_1$};
\node[draw,circle,thick,fill=black!10] (2) at (0,2) {$v_2$};
\node[draw,circle,thick,fill=black!10] (3) at (2,2) {$v_3$};
\node[draw,circle,thick,fill=black!10] (4) at (2,0) {$v_4$};
\node[draw,circle,thick,fill=black!10] (5) at (4,0) {$v_5$};
\node[draw,circle,thick,fill=black!10] (6) at (0,-2) {$v_6$};
\node[draw,circle,thick,fill=black!10] (7) at (2,-2) {$v_7$};

\graph {
(1) --[bend right=20,dotted,draw=black!50!green,thick,swap,"2"] (2);
(2) --[bend right=20,dotted,draw=black!50!green,thick,swap,"2"] (3);
(6) --[bend right=20,dotted,draw=black!50!green,thick,swap,"4"] (1);
(7) --[bend right=20,dotted,draw=black!50!green,thick,swap,"4"] (6);
(1) --[bend right=20,dotted,draw=black!50!green,thick,swap,"3"] (4);
(4) --[bend right=20,dotted,draw=black!50!green,thick,swap,"3"] (5);

};

\end{tikzpicture}
 }
        \end{tabular}
        \caption{Extreme supported, strongly consistent solutions and their outcome vectors obtained with \Cref{alg:DP,alg:CSP,alg:update} for \Cref{ex:missing_nonextreme} {for $v_1$ as the selected facility}.}
        \label{fig:bsp_julia_nonextreme_all}
    \end{figure}    
    If we select the red edge, we update the preference matrix according to \eqref{preference2}:
    \begin{equation*}
        \Aup=\begin{pmatrix}
            1&1\\\frac{1}{2}&1
        \end{pmatrix}.
    \end{equation*}
    In the following, every {consistent} path choice is uniquely {defined}: Since $\Aup$ expresses that one red edge is preferred over two green edges, one red edge is also preferred over three, four and more green edges. Hence, the matrix is not updated in any of the subsequent iterations and we derive the strongly consistent solution with outcome vector $(9,9)^\top$, see \Cref{fig:bsp_julia_nonextreme_all}.

        \begin{figure}[!ht]
        \centering
        \begin{tikzpicture}[xscale=0.54,yscale=0.5]

\draw[-,dashed] (6.5,0) -- (8.5,0);
\draw[->] (8.5,0) -- (27.5,0) node[right] {$f_1$};
\draw[->] (7,-0.5) -- (7,9.5) node[above] {$f_2$};

\draw plot coordinates {(9,9) (12,6) (18,3) (27,0)};

\draw[fill=blue](9,9)circle(3pt) node[above] {$1r\prec2g$};
\draw[fill=blue](12,6)circle(3pt) node[above right] {$2g\prec1r\prec3g$};
\draw[fill=blue](18,3)circle(3pt) node[above right] {$3g\prec1r\prec4g$};
\draw[fill=blue](27,0)circle(3pt) node[above,yshift=2pt,xshift=2pt] {$4g\prec1r$};

\draw[fill=green](10,8)circle(3pt);
\draw[fill=green](11,7)circle(3pt);
\draw[fill=green](14,5)circle(3pt);
\draw[fill=green](16,4)circle(3pt);
\draw[fill=green](21,2)circle(3pt);
\draw[fill=green](24,1)circle(3pt);

\path (10,8) -- (11,7) coordinate[midway] (M1);
\draw (M1) node[above right,yshift=-4pt,xshift=2pt] {$2g=1r$};
\path (14,5) -- (16,4) coordinate[midway] (M2);
\draw (M2) node[above right,yshift=-4pt,xshift=2pt] {$3g=1r$};
\path (21,2) -- (24,1) coordinate[midway] (M3);
\draw (M3) node[above right] {$4g=1r$};

\foreach \x in {9,12,18,27}
{
    \draw (\x,0.15) -- (\x,-0.15); \draw (\x, -0.6) node[below] {\x}; }

\foreach \y in {3,6,9}
{
    \draw (6.85,\y) -- (7.15,\y); \draw (6.7, \y) node[left] {\y}; }
    
\end{tikzpicture}         \caption{Non-dominated outcome vectors and their associated preferences for \Cref{ex:missing_nonextreme}. Extreme supported solutions are illustrated in blue, non-extreme supported solutions in green.}
        \label{fig:bsp_julia_outcome}
    \end{figure}
    
    If we select the green edge as path to node $v_2$, we update the preference matrix according to \eqref{preference1}:
    \begin{equation}\label{eq:matrix_Aup}
        \Aup=\begin{pmatrix}
            1&2\\0&1
        \end{pmatrix}
    \end{equation}
    and continue to evaluate the path alternatives to node $v_4$. Regarding the updated preference matrix, the path using the red edge as well as the path using the green edge are feasible.
    Selecting the red edge, we consider the updated preference matrix $$\Aup=\begin{pmatrix}
        1&2\\ \frac{1}{3}&1
    \end{pmatrix}$$ and all subsequent path choices are unique again. To nodes $v_5,v_6$ and $v_7$ we use the respective red edges, while we use the green edges to node $v_3$ and derive a solution with outcome vector $(12,6)^\top$.
    
    Selecting the alternative path using the green edge to node $v_4$, both path alternatives to node $v_6$ are feasible again. Both choices lead to an update of the preference matrix, which is given by $\Aup=\bigl(\begin{smallmatrix}  1&3\\ \frac{1}{4}&1 \end{smallmatrix}\bigr)$ and $\Aup=\bigl(\begin{smallmatrix}  1&4\\0&1 \end{smallmatrix}\bigr)$, respectively. All remaining path choices are unique. Hence, we derive two further solutions with outcome vectors $(18,3)^\top$ and $(27,0)^\top$.
    All four strongly consistent solutions derived above are illustrated in \Cref{fig:bsp_julia_nonextreme_all}, {and the corresponding outcome vectors are shown by blue circles in Figure~\ref{fig:bsp_julia_outcome}}.
\end{example}

    \begin{rem}\label{rem:selectionstrategy}
    \Cref{alg:DP} may fail to find all strongly vertex consistent solutions if the next node $v_i$ is chosen arbitrarily. In \Cref{ex:missing_nonextreme}, selecting $v_4$ as the initial node prevents the construction of the preference matrix \eqref{eq:matrix_Aup} due to the update \eqref{eq:beta} and thus, the solution with outcome vector $(12,6)^\top$ cannot be derived.
    Therefore, we recommend selecting the node that is lexicographically closest to the facility location among those not yet added nodes as a selection strategy.
    \end{rem}

    Note that in \Cref{ex:missing_nonextreme}, there exist further solutions that are vertex consistent, but not strongly vertex consistent. The {corresponding outcome vectors} are illustrated {by green circles} in \Cref{fig:bsp_julia_outcome}. Even though all these solutions have in common that they express a tied preference, it is in general not sufficient to allow $\beta\cdot\gamma=1$ {to identify them}. Due to the design of \Cref{alg:update}, the existence of at least three path alternatives is necessary to update both parameters $\beta$ and $\g$ in the same iteration. If only one of the parameters is updated, a tied preference cannot be obtained with that specific parameter, since the respective other path will always be discarded.

\subsection{Consistent, but not Strongly Consistent Solutions}\label{app:nonex}

To ensure that we obtain {a minimum complete set for} all consistent, i.\,e., non-extreme supported solutions, we extend the algorithms by a second update function covering the tied preference case. {Towards this end}, we introduce an $n$-tuple $\M=\{\M_1,\dots,\M_n\}$ that {keeps track of} the constructed {preference} matrices in each iteration to reduce the computational effort. The tuple contains a set of matrices $\M_i$ for each node $v_i\in V$, which is initialized in \Cref{alg:DP_nonex} with the empty set for $v_i\in V\setminus\{\facloc\}$ and with $A_{}^{{0}}$ for the facility location $\facloc$. {Note that \Cref{alg:DP_nonex} determines the set of vertex consistent solutions $\VCon$ {by recursively calling} \Cref{alg:CSP_nonex}, similar to Algorithms~\ref{alg:DP} and \ref{alg:CSP} above.}
The adaptions of \Cref{alg:DP_nonex,alg:CSP_nonex}, compared to \Cref{alg:DP,alg:CSP}, are highlighted.

\begin{algorithm}[h!]
\caption{Extended Node-based Algorithm}\label{alg:DP_nonex}
    \KwIn{graph $G=(V,E)$ with $|V|=n$, objective function $c:\X\rightarrow \R^2_{\geq}$, initial preference matrix $A_{}^{{0}}\in\A_2$}
    \KwOut{{minimum complete} set of consistent solutions $\Con$ }
    $S_v\coloneqq\emptyset\quad \forall\:v\in V$\;
    \Forall{$\facloc\in V$}{
    Compute the sets of shortest paths $\ivPaths$ from $\facloc$ to $v_i$ for all $v_i\in V\setminus\{\facloc\}$ with objective function $A^0\cdot c{(\cdot)}$\;
    $\allPaths\coloneqq(\mathcal{P}_1(v),\dots,\mathcal{P}_n(v))$\;
    \highlight{$\M \coloneqq (\emptyset,\emptyset,\dots,\emptyset)$ \;
    $\M_v\coloneqq \{A^0\}$\;
    $\VCon\coloneqq \text{CSP}(G,c,\facloc,\allPaths,A^0,(\facloc,\emptyset),\emptyset,\M)$ \;}
    \Forall{$\sol\in\VCon$}{
    $f(z)\coloneqq A_{}^{{0}}\cdot c(\selectedPaths)$\;
    $S_v\coloneqq S_v\cup(f(z),z)$\;
}
    }
    Filter the set $\Con\coloneqq \bigcup_{\facloc\in V}S_v$ wrt.\ Pareto dominance\;
    \Return $\Con$
\end{algorithm} \begin{algorithm}[ht!]
\caption{Consistent Shortest Paths\\ \hspace*{6.8em}$\text{CSP}(G,c,\facloc,\allPaths,A,G_c,\VCon,\M)$}\label{alg:CSP_nonex}
    \KwIn{graph $G=(V,E)$, objective function $c:\X\rightarrow \R^2_{\geq}$, facility location $\facloc\in V$, $n$-tuple $\allPaths$ containing {Pareto optimal} paths, preference matrix $A\in\A_2$, {graph representing already selected paths $G_c=\parS$,} {partial} set of vertex consistent solutions $\VCon$ for the facility location $\facloc$, \highlight{$n$-tuple $\M$ containing sets of matrices $\M_i$ for all nodes $v_i\in V$}}
    \KwOut{{minimum complete} set of vertex consistent solutions $\VCon$ for the facility location $\facloc$}
Choose $v_i\in V\setminus\Vc$ using a suitable selection strategy\;
        Extract the $v$-$v_i$-paths $\ivPaths$ from $\allPaths$ \; 
        Filter $\ivPaths$ for non-dominance wrt.\ $A\cdot c{(\cdot)}$ \;
        \Forall{$\Path\in\ivPaths$ with $\Path=(\VP,\EP)$}{
            \highlight{$\M_i\coloneqq \emptyset$\;}
            $\Aup\coloneqq\text{update}(A,\Path,\ivPaths,c)$\;
            \If{\highlight{$\aup_{1,2}\cdot \aup_{2,1}\leq1$}}{
                \If{$\Vc\cup\VP\neq V$}{
                $\highlight{\text{genA}(G,c,\facloc,v_i,\allPaths,\Aup,(\Vc\cup\VP,\Ec\cup\EP),\VCon,\M)}$\;
                }
                \Else{
                    {Extract the set $\selectedPaths$  from $(\Vc\cup\VP,\Ec\cup\EP)$}\;
                    $\VCon\coloneqq\VCon\cup{\{(\facloc,\selectedPaths)\}}$\;
                }
            }    
        }
\Return $\VCon$\;
\end{algorithm}
 \begin{algorithm}[h!]
\caption{Generate Additional Preference Matrices\\
\hspace*{6.8em}$\text{genA}(G,c,\facloc,v_i,\allPaths,A,\Aup,G_c,\VCon,\M)$}\label{alg:CSPrec}
    \KwIn{graph $G=(V,E)$, objective function $c:\X\rightarrow \R^2_{\geq}$, facility location $\facloc\in V$, customer location $v_i\in V\setminus\{\facloc\}$, $n$-tuple $\allPaths$ containing {Pareto optimal} paths, preference matrix $A\in\A_2$, updated preference matrix $\Aup\in\A_2$, 
    {graph representing already selected paths $G_c=\parS$,}
{partial} set of vertex consistent solutions $\VCon$ for the facility location $\facloc$, $n$-tuple $\M$ containing sets of matrices $\M_i$ for all nodes $v_i\in V$}
    \KwOut{{minimum complete} set of vertex consistent solutions $\VCon$ for the facility location $\facloc$}
$\Aup_{\beta}\coloneqq \text{updateEQ}(\Aup,1)$\;
    $\Aup_{\g}\coloneqq \text{updateEQ}(\Aup,2)$\;
    $bool_{\beta}\coloneqq false$\;
    $bool_{\g}\coloneqq false$\;
    \If{$\Aup\neq A\land \Aup\notin\M$}{
        $\M_i\coloneqq \M_i\cup \{\Aup\}$
    }
    \If{$\Aup_{\beta}\notin\M$}{
        $\M_i\coloneqq \M_i\cup \{\Aup_{\beta}\}$\;
        $bool_{\beta}\coloneqq true$\;
    }
    \If{$\Aup_{\g}\notin \M$}{
        $\M_i\coloneqq \M_i\cup \{\Aup_{\g}\}$\;
        $bool_{\g}\coloneqq true$\;
    }
    $\VCon\coloneqq \VCon\cup\text{CSP}(G,c,\facloc,v_i,\allPaths,A,\Aup,G_c,\VCon,\M)$\;
    \If{$\Aup_{\beta}\neq \Aup\land\: bool_{\beta}=true$}{
        $\VCon\coloneqq \VCon\cup\text{CSP}(G,c,\facloc,v_i,\allPaths,A,\Aup_{\beta},G_c,\VCon,\M)$\;
    }
    \ElseIf{$\Aup_{\g}\neq \Aup\land \Aup_{\g}\neq \Aup_{\beta}\land\: bool_{\g}=true$}{
        $\VCon\coloneqq \VCon\cup\text{CSP}(G,c,\facloc,v_i,\allPaths,A,\Aup_{\g},G_c,\VCon,\M)$\;
    }
    \Return $\VCon$\;
\end{algorithm} \begin{algorithm}[h!]
\caption{$\text{updateEQ}(A,q)$}\label{alg:updateEQ}
    \KwIn{preference matrix $A\in\A_2$, indicator $q\in\{1,2\}$}
    \KwOut{updated preference matrix $A\in\A_2$}
    $\beta\coloneqq {a}_{1,2}$\;
    $\g\coloneqq {a}_{2,1}$\;
    \If{$q=1\land\beta\neq\beta_0$}{
        ${a}_{2,1}\coloneqq \frac{1}{\beta}$\;
    }
    \ElseIf{$q=2\land\g\neq\g_0$}{
        ${a}_{1,2}\coloneqq \frac{1}{\g}$\;
    }
    \Return $A$\;
\end{algorithm} 
In \Cref{alg:CSPrec}, we construct two additional preference matrices $\Aup_{\beta}$ and $\Aup_{\gamma}$ expressing the tied preferences $\beta=\frac{1}{\gamma}$ and, vice versa, $\gamma=\frac{1}{\beta}$ using \Cref{alg:updateEQ}. We can skip this step, if the respective parameter $\beta$ or $\g$ has not been updated compared to its initial value \(\beta_0\) or \(\g_0\), respectively.
By the additional construction of these two matrices we ensure that we also cover the tied cases even if the graph lacks a sufficient number of path alternatives for them to be constructed with \Cref{alg:update}. To avoid multiple redundant iterations with the same preference matrices, we add the matrices $\Aup_{\beta}$ and $\Aup_{\gamma}$ to the set $\M_i$ if they have not been added to $\M_i$ in a previous iteration for node $v_i\in V$. If one of the matrices has been added before, we discard the respective matrix $\Aup_{\beta}$ or $\Aup_{\gamma}$. In this case, using these matrices would only lead to the same solutions that we also obtain with the same matrix starting from the first node where the matrix was constructed. In contrast to the matrices $\Aup_{\beta}$ and $\Aup_{\g}$ we always continue to construct a vertex consistent solution with the matrix $\Aup$, even if the matrix has not been updated. In this case, we may not have gained any further preference information by adding path $\Path$, but as long as there exist nodes that have not been added yet, we continue with the construction process. Note that with this strategy, redundant iterations are significantly reduced, but not avoided completely. If the matrices have not been considered in previous iterations, we call \Cref{alg:CSP_nonex} not only with the matrix $\Aup$, but also with the matrices $\Aup_{\beta}$ and $\Aup_{\gamma}$.

To ensure that in the remaining part of the solution all feasible combinations of weights and paths can be selected, we set $\M_i=\emptyset$ after each path choice in \Cref{alg:CSP_nonex}. As soon as there exists more than one path alternative, we have to guarantee that in all partial solutions beginning with node $v_i$ we can make the same decisions. Hence, we discard the matrices we have already constructed in this iteration to enable the construction of the same matrices in the partial solutions in the later construction process.

\section{Numerical Results}\label{sec:numeric}
We test our algorithms with real world test instances generated with \cite{OpenStreetMap}. The test instances are created around a central address with radii between 100 and 250 meters, and we select central addresses in the city centers of Wuppertal, Germany, and Münster, Germany. For details of the  construction of such test instances, we refer to \cite{boeing2025modelinganalyzingurbannetworks}. We model our test instances as ordinal location problems, where the edge lengths correspond to the distance between nodes. We assign category $\cat_1$ (green) if there exists a bicycle {lane} and category $\cat_2$ (red) if not. The resulting graphs are simplified, see \cite{boeing2025topologicalgraphsimplificationsolutions}, and vary in size. We obtain graphs consisting of up to 123 nodes and up to 312 edges. We only consider those nodes as possible facility and customer locations that {correspond to} a real address and denote this set of nodes by $V'\subseteq V$  with $|V'|=n'\leq n$. Nodes representing junctions or other infrastructure are not regarded as feasible facility locations. 

\begin{table}[ht!]
\centering\small
\begin{tabular}{rr @{\qquad} rrr @{\qquad}rr @{\qquad}rr }\toprule
$n$ & $n'$ & $m$ & $\cat_1$ & $\cat_2$ & $|\SCon|$ & $|\VSCon|$ & $|\Con|$ & $|\VCon|$ \\ \midrule
8 & 8 & 18 & 6 & 12 & 4 & 19 & 4 & 39\\
10 & 10 & 18 & 16 & 2 & 1 & 10 & 1 & 10\\
19 & 7 & 46 & 8 & 38 & 1 & 19 & 1 & 88\\
23 & 9 & 62 & 32 & 30 & 9 & 115 & 9 & 831\\
24 & 20 & 60 & 18 & 42 & 9 & 143 & 11 & 700\\
38 & 16 & 94 & 22 & 72 & 4 & 221 & 4 & 12.060\\
43 & 34 & 92 & 64 & 28 & 2 & 141 & 3 & 957\\
50 & 34 & 120 & 24 & 96 & 10 & 194 & 12 & 1.805\\ 
52 & 22 & 130 & 32 & 98 & 5 & 449 & 10 & 70.646\\
69 & 28 & 192 & 64 & 128 & 32 & 2.498 & 41 & 20.247\\
79 & 27 & 210 & 48 & 162 & 16 & 727 & 21 & 136.587\\
82 & 47 & 216 & 72 & 144 & 31 & 1.574 & 42 & 35.578\\
96 & 41 & 222 & 108 & 114 & 16 & 734 & 22 & 6.453\\
102 & 26 & 268 & 82 & 186 & 41 & 1.537 & 45 & 17.374\\
123 & 7 & 312 & 78 & 234 & 47 & 486 & 60 & 8.592 \\
\bottomrule
\end{tabular}
\caption{15 test instances with $n$ nodes, $n'$ {customer (and candidate)} locations, and $m$ edges with the respective ratio of edges in category $\cat_1$ and $\cat_2$, the number of strongly consistent solutions $|\SCon|$, the number of strongly vertex consistent {outcome vectors} $|\VSCon|$, the number of consistent {outcome vectors} $|\Con|$, and the number of vertex consistent {outcome vectors} $|\VCon|$.}
\label{table:test_instances}
\end{table}

In \Cref{alg:DP}, we now iterate over the subset $V'$ rather than the complete set $V$. The graphs also vary in their ratio of green and red edges, where we observe that most of our test instances have a higher proportion of red edges while only four instances have more green or a level number of green and red edges. In \Cref{table:test_instances}, we evaluate the number of strongly consistent {outcome vectors} $|\SCon|$, the number of strongly vertex consistent {outcome vectors} $|\VSCon|$, the number of consistent {outcome vectors} $|\Con|$ and the number of vertex consistent {outcome vectors} $|\VCon|$. We observe that for small instances, the number of consistent and strongly consistent {outcome vectors} matches, while for larger instances, the number of consistent {outcome vectors} that are not strongly consistent increases significantly. This difference is larger when comparing the number of vertex consistent {outcome vectors} and the number of strongly vertex consistent {outcome vectors}. Since we iteratively construct {a minimum complete set of} (strongly) vertex consistent solutions and filter these sets for {globally, i.e., over all vertices,} non-dominated solutions afterwards, the computational effort of computing all consistent {outcome vectors} is significantly higher. While the runtime to determine the set of strongly consistent {outcome vectors} is less than 75 seconds for all instances, it takes up to 40 minutes to compute all consistent {outcome vectors}. \Cref{fig:bsp_solutions} illustrates three strongly consistent solutions and their respective efficient facility location.

\begin{figure}[ht!]
    \centering
    \begin{minipage}[b]{0.32\textwidth}
        \includegraphics[width=\textwidth, keepaspectratio]{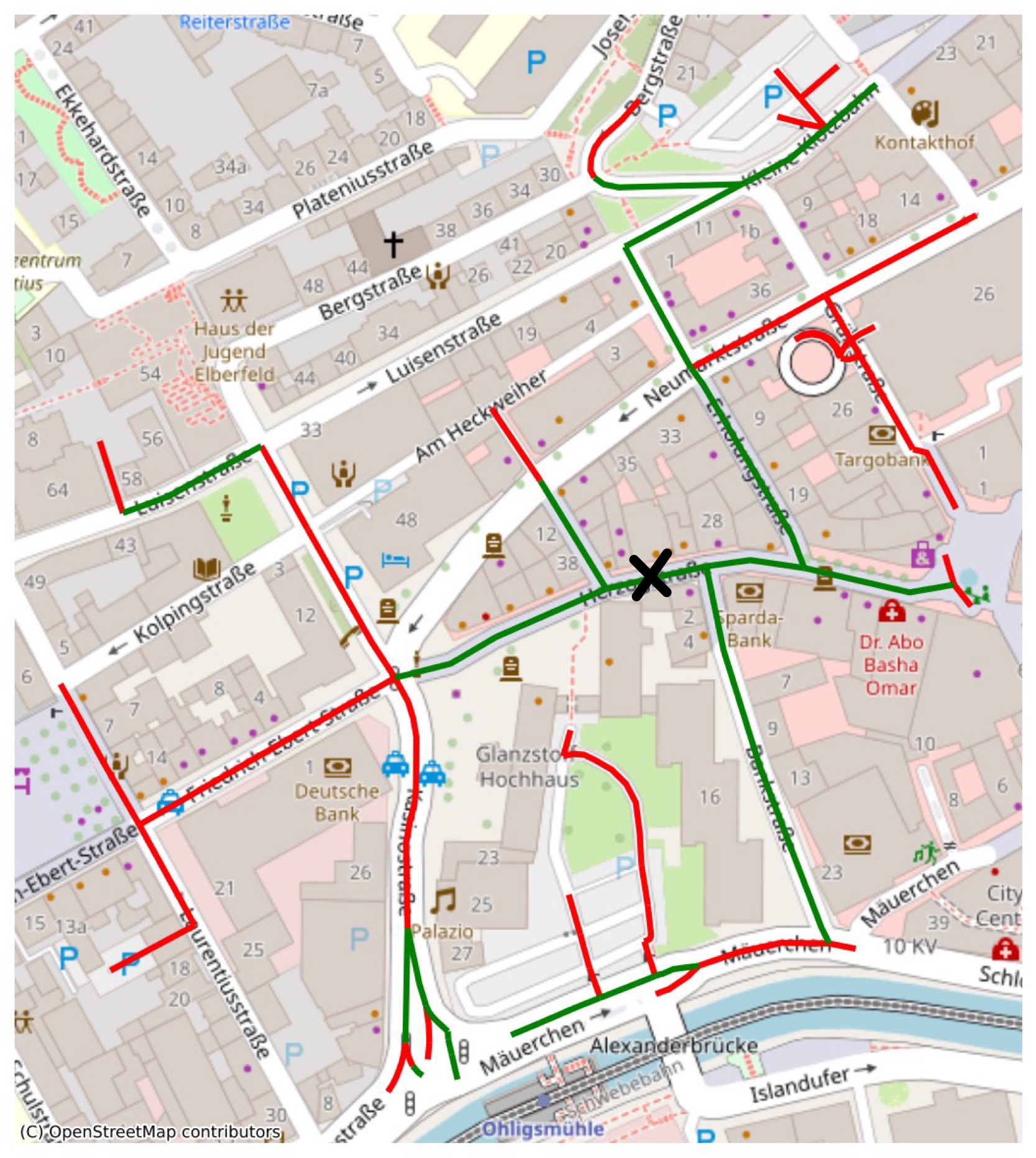}
    \end{minipage}
    \hfill
    \begin{minipage}[b]{0.32\textwidth}
        \includegraphics[width=\textwidth, keepaspectratio]{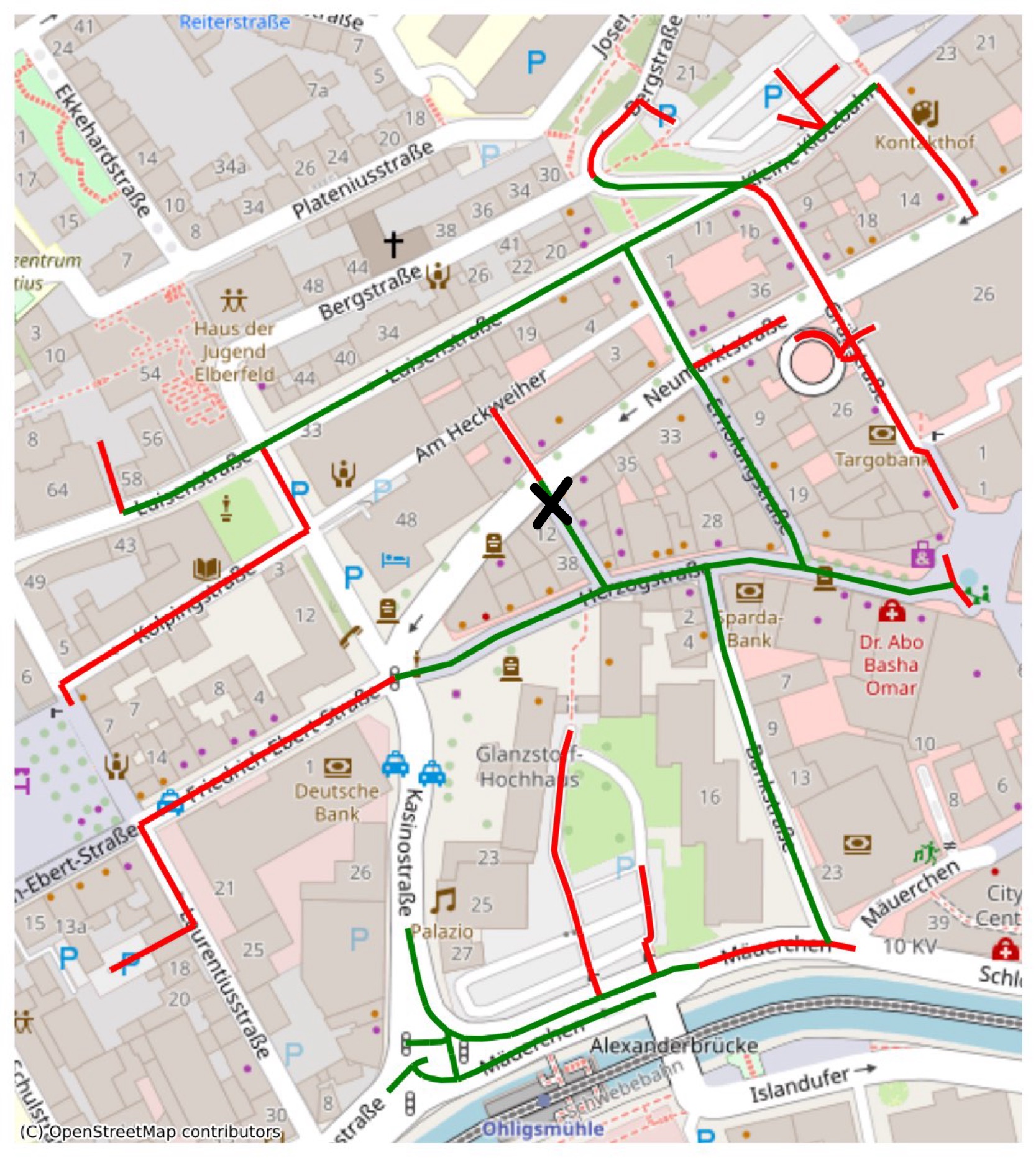}
    \end{minipage}
    \hfill
    \begin{minipage}[b]{0.32\textwidth}
        \includegraphics[width=\textwidth, keepaspectratio]{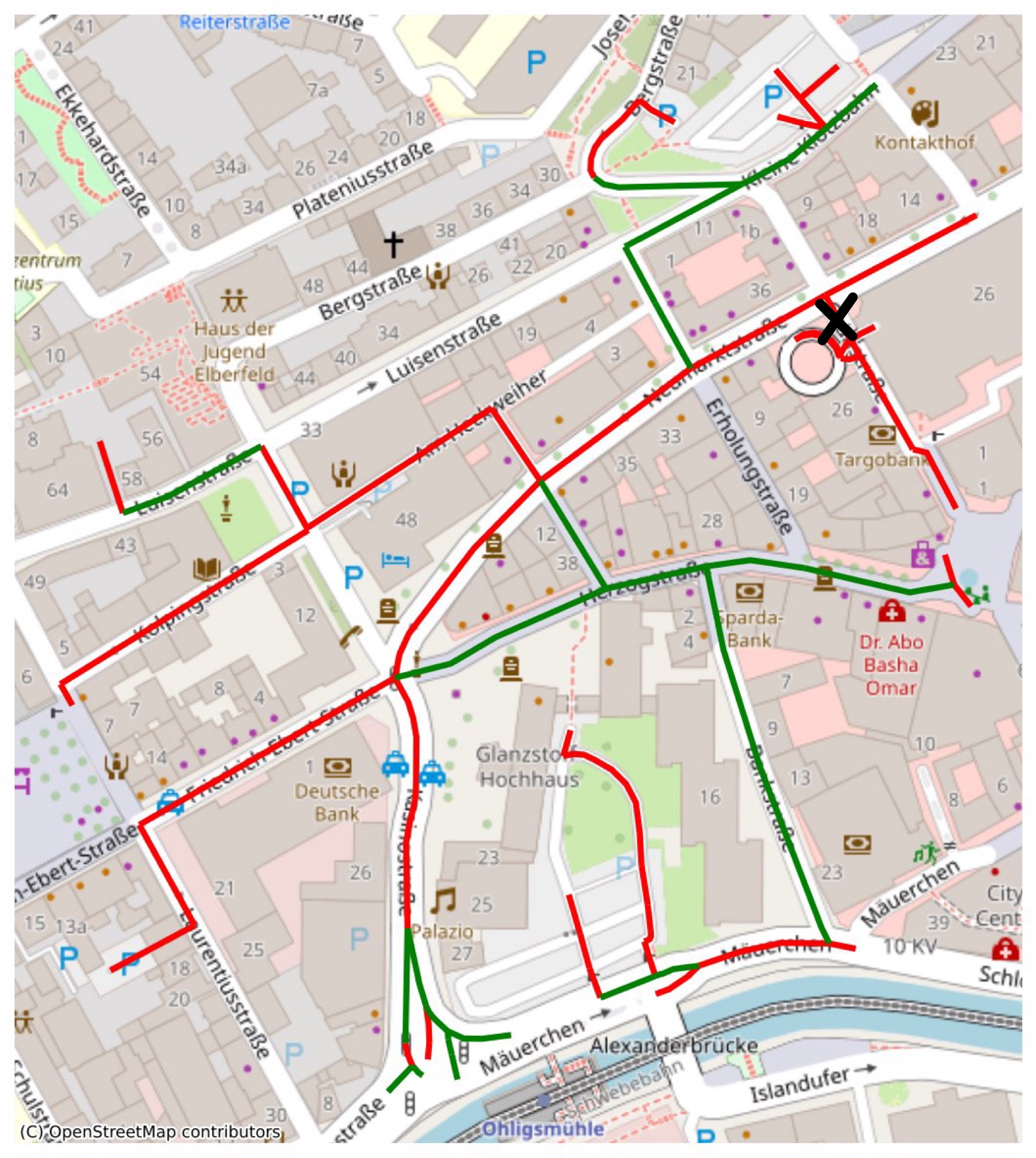}
    \end{minipage}

    \caption{Three strongly consistent solutions and their Pareto optimal facility locations in a radius of 200m around Herzogstr. 40, 42103 Wuppertal. Edges in category $\cat_1$ are depicted in green, edges in category $\cat_2$ in red, the Pareto optimal facility location is marked with a black cross.}
    \label{fig:bsp_solutions}
\end{figure}

\section{Conclusion}\label{sec:concl}
In this paper, we investigate bi-objective median location {and path planning} problems with {bi-}objective edge costs. We introduce the concept of a (strongly) consistent path choice and relate it to (extreme) supportedness. We prove that it is sufficient to consider the set of nodes as the set of feasible locations, since each non-dominated outcome vector has at least one pre-image located in a node. Furthermore, we show that for graphs with unique paths to all nodes, like line graphs or trees, the (Pareto) optimal solutions only depend on the {demands}, as long as the edge costs are non-negative and non-zero. {Hence, in this situation we can generalize our results to an arbitrary number of objective functions.}

For general graphs, we present an algorithm that computes {a minimum complete set of} strongly consistent solutions for bi-objective location problems. Moreover, we adapt this algorithm such that {a minimum complete set of} consistent solutions is computed. The concept of consistent paths as well as the algorithms are illustrated at an example of an ordinal location problem with two categories. We solve some ordinal location problems with two categories on {several real-world} instances which were derived from OpenStreetMap data. With growing instance size, we observe that the number of consistent but not strongly consistent {outcome vectors} increases significantly. Future work could investigate median location problems with consistent path choice with more than two objectives.

\subsection*{Acknowledgements}\noindent
This work is partially supported by the  project SAFeR: Safe, Algorithm-based Footpath Routing funded by the EU program EFRE/JTF NeueWege.IN.NRW (EFRE 20800941).

\end{document}